\newtheorem{thm}{Theorem}[section]
\newtheorem{cor}[thm]{Corollary}
\newtheorem{lem}[thm]{Lemma}
\newtheorem{prop}[thm]{Proposition}
\newtheorem{quest}[thm]{Question}
\theoremstyle{remark}
\newtheorem*{rem}{Remark}
\newcommand{\eps}{\varepsilon}
\newcommand{\ie}{i.e.\ }
\newcommand{\zp}{\mathbb{Z}_+}
\newcommand*{\abs}[1]{\lvert #1\rvert}
\newcommand*{\prob}[1]{\mathbb{P}(#1)}
\newcommand*{\mean}[1]{\mathbb{E}(#1)}
\newcommand*{\ceil}[1]{\lceil #1\rceil}
\newcommand*{\bfrac}[2]{\genfrac{(}{)}{}{}{#1}{#2}}
\newcommand*{\conn}[1][G]{\operatorname{conn}(#1)}
\title{The number and average size of connected sets in graphs with degree constraints}
\author{John Haslegrave\thanks{Research supported by the UK Research and Innovation Future Leaders Fellowship MR/S016325/1.}}
\affil{Mathematics Institute, University of Warwick}
\begin{document}
\maketitle

\begin{abstract}The average size of connected vertex subsets of a connected graph generalises a much-studied parameter for subtrees of trees. For trees, the possible values of this parameter are critically affected by the presence or absence  of vertices of degree $2$. We answer two questions of Andrew Vince regarding the effect of degree constraints on general connected graphs. We give a new lower bound, and the first non-trivial upper bound, on the maximum growth rate of the number of connected sets of a cubic graph, and in fact obtain non-trivial upper bounds for any constant bound on the maximum degree. We show that the average connected set density is bounded away from $1$ for graphs with no vertex of degree $2$, and generalise a classical result of Jamison for trees by showing that in order for the connected set density to approach $1$, the proportion of vertices of degree $2$ must approach $1$. Finally, we show that any sequence of graphs with minimum degree tending to infinity must have connected set density tending to $1/2$.
\end{abstract}

\section{Introduction}\label{intro}
Connectedness is perhaps the most fundamental property of a network, and if nodes of a network may fail then the robust parts of the network, which remain connected even if all other nodes fail, are of particular interest. In particular, we might ask what a typical such part looks like. 

Such questions have a long history when the network is a tree, that is, a minimally connected graph. Here the connected subgraphs are precisely the subtrees. Jamison \cite{Jam83} studied the average order of a subtree of a fixed tree, and its normalised version, the \textit{subtree density}, that is, the average order of a subtree divided by the order of the host tree. He showed that, among host trees of given order, the subtree density is minimised for the path, where it is just over $1/3$. At the other extreme, the subtree density can be arbitrarily close to $1$ for large trees. Meir and Moon \cite{MM83} gave asymptotic results on the average value over all trees of a given order.

In both extremal cases, vertices of degree $2$ play a fundamental role. Jamison proved that in order for the subtree density to approach $1$, the proportion of vertices of degree $2$ in the original tree must also approach $1$. Much more is known about the maximum subtree density among trees of fixed order, and the possible structure of maximising trees; see recent work by Mol and Oellermann \cite{MO19} and by  Cambie, Wagner and Wang \cite{CWW21}. In the other direction, Jamison \cite{Jam83} asked whether any tree without vertices of degree $2$ (such trees being sometimes referred to as \textit{series-reduced trees}) must have subtree density exceeding $1/2$. This question remained unresolved for over 25 years, until Vince and Wang \cite{VW10} proved that the subtree density of a series-reduced tree must lie between $1/2$ and $3/4$. Both these bounds are best possible, and the present author \cite{Has14} classified the extremal sequences of trees in both cases.

The local subtree density, that of subtrees meeting a specified vertex, is also of interest. Jamison \cite{Jam83,Jam84} proved that this always exceeds the global density, and in fact showed monotonicity for a generalisation that interpolates between local and global. Again, vertices of degree $2$ play a key role; Wagner and Wang \cite{WW16} showed that the local subtree density of a tree at any vertex is at most twice its subtree density, but for series-reduced trees the two can differ by at most a constant.

There are at least two natural generalisations of subtree density to the case where the underlying graph, $G$, is connected but not necessarily a tree. One is to ask about the average order of subtrees contained in $G$, that is, subgraphs which are trees, and define the subtree density of general graphs exactly as for trees.  The other is to ask about connected induced subgraphs as alluded to above. In other words we say a nonempty set of vertices of $G$ is a \textit{connected set} if the subgraph consisting of these vertices and all edges between them is connected, and the \textit{connected set density} is the normalised average order of the connected sets of $G$. We remark that the profile of the connected sets of $G$ will typically be very different to that of its subtrees, since every connected set corresponds to at least one subtree on the same set of vertices, but larger connected sets typically correspond to a greater number, biassing the average order of a subtree upwards.

Both settings have been the focus of much recent work. The study of the subtree density of a general graph was initiated by Chin, Gordon, MacPhee and Vincent \cite{CGMV}, and the connected set density by Kroeker, Mol and Oellermann \cite{KMO18}. Both generalisations appear to be much more difficult than the tree case. The path is believed to minimise both parameters, which would generalise Jamison's initial result for trees, and this was explicitly conjectured in the case of connected set density \cite{KMO18,Vin21}. This conjecture was proved only very recently, independently and almost simultaneously, first by Vince \cite{Vin21+a} and secondly by the present author \cite{Has21+}. For the subtree density it is still not known, and likewise the subtree density is conjectured to be maximised by the complete graph, but this is not known. A key difficulty is the lack of monotonicity in these parameters under edge addition; see recent work by Cameron and Mol \cite{CM21} for a discussion of this. (While they consider subtree density, the same examples show that monotonicity fails, to the same degree, for connected set density.)

In this paper we consider exclusively the connected induced subgraph setting. Fix a connected graph $G$ on $n$ vertices. All graphs considered will be simple, finite and connected, and we shall frequently assume that $G$ is non-trivial (\ie has at least two vertices). A non-empty set $S\subseteq V(G)$ is a \textit{connected set} for $G$ if the induced subgraph $G[S]$ is connected. Writing $\conn$ for the set of connected sets for $G$, let $N(G):=\abs{\conn}$ be the number of these sets, $A(G):=N(G)^{-1}\sum_{S\in\conn}\abs{S}$ be their average order, and $D(G):=A(G)/n$ be the connected set density.

We consider two questions posed by Andrew Vince \cite{Vin21}. The first concerns the number of connected sets. Write $c(G)=\frac12\sqrt[n]{N(G)}$, so that $N(G)=(2c(G))^n$. Clearly $1/2<c(G)<1$ for any graph $G$, and without further restrictions both bounds are best possible, even for trees, since $N(P_n)=(n^2+n)/2=(1+o(1))^n$ and $N(K_{1,n-1})=2^{n-1}+n-1=(2-o(1))^n$.
\begin{quest}[{\cite[Question 5]{Vin21}}]\label{q5}Let $\mathcal G_3$ be the set of finite connected cubic graphs. What is $c_3:=\limsup_{G\in \mathcal G_3}c(G)$?
\end{quest}
Vince gave the lower bound $c_3>0.777$, arising from the prisms or circular ladders. We give an improved lower bound of $c_3>\sqrt[4]{7/16}>0.813$.

We also give the the first non-trivial upper bound on $c_3$, that $c_3<0.9781$. As might be expected, the upper bound we obtain applies to subcubic graphs (that is, graphs of maximum degree at most $3$), not just cubic graphs. Our methods also give non-trivial upper bounds on $c_d$, defined analogously, for each fixed $d$. However, no weaker bound on the maximum degree can be sufficient: there exist graph sequences with $c(G)\to1$ but with $\Delta(G)$ growing arbitrarily slowly as a function of $\abs{G}$.

The other question we consider concerns the connected set density $D(G)$ for graphs with lower bounds on the degrees.
\begin{quest}[{\cite[Question 8]{Vin21}}]\label{q8}Do there exist graphs with minimum degree at least $3$ and connected set density arbitrarily close to $1$?
\end{quest}
We answer this in the negative, as a consequence of some stronger results. First, we bound the connected set density of graphs with minimum degree at least $3$ away from $1$; in fact the same upper bound applies to graphs with no vertex of degree $2$, whether or not vertices of degree $1$ are permitted. Secondly, we show that in any sequence of graphs with connected set density tending to $1$, the proportion of vertices of degree $2$ must tend to $1$; this extends a classical result of Jamison for trees \cite{Jam83}. Finally, we give upper and lower bounds on the density of graphs with minimum degree at least $d$, which tend to $1/2$ as $d\to\infty$. This complements very recent work by Vince \cite{Vin21+b}, giving explicit, stronger lower bounds under the more restrictive assumption of $k$-connectivity.

\section{The number of connected sets of cubic graphs}\label{sec:q5}
We first give an improved lower bound on $c_3$. Vince \cite[Lemma 7.2]{Vin21} showed that the prism graph, or circular ladder, $\Pi_n$, satisfies 
\[N(\Pi_n)=1-3n+(1+\sqrt2)^n+(1-\sqrt2)^n+3n\frac{(1+\sqrt2)^n-(1-\sqrt2)^n}{2\sqrt2}.\]
In particular, $N(\Pi_n)\sim \frac{3n(1+\sqrt2)^n}{2\sqrt2}$ and so, since $\abs{\Pi_n}=2n$, we have 
\[\lim_{n\to\infty}c(\Pi_n)=\frac12 \sqrt{1+\sqrt 2}\approx 0.777.\]
For $n\geq 2$, the \textit{criss-cross prism} $\Pi^\times_n$ is the cubic graph consisting of two cycles $v_1v_2\cdots v_{2n}$ and $w_1w_2\cdots w_{2n}$ together with the edges $v_iw_{i+1}$ for $i$ odd and $v_iw_{i-1}$ for $i$ even; see Figure \ref{criss-cross}.

\begin{figure}
\centering
\begin{tikzpicture}
\foreach \x in {0,60,...,300}{\draw[fill] (\x:2) circle (0.05);
\draw (\x:2) -- (\x+60:2);
\draw[fill] (\x:3) circle (0.05);
\draw (\x:3) -- (\x+60:3);}
\foreach \x in {0,120,240}{\draw (\x:2) -- (\x+60:3);
\draw (\x+60:2) -- (\x:3);}
\end{tikzpicture}
\caption{The criss-cross prism $\Pi_3^\times$ (the Franklin graph).}\label{criss-cross}
\end{figure}
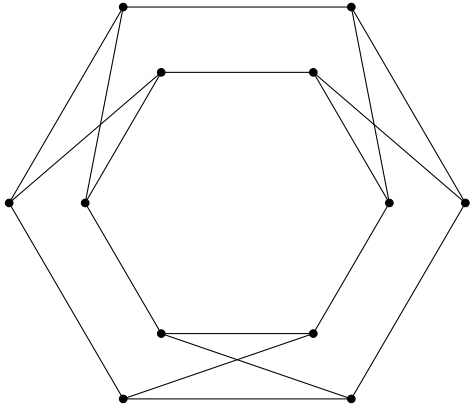
The following result gives an exact value for $N(\Pi_n^\times)$ provided $n\geq 3$. For $n=2$ we have $\Pi_2^\times\cong\Pi_4$ and so $N(\Pi_2^\times)=167$.
\begin{thm}\label{thm:criss-cross}For $n\geq 3$ we have $N(\Pi_n^\times)=7^n+\frac {191}3n7^{n-2}-\frac{16}{3}n$. In particular, since $\abs{\Pi_n^\times}=4n$ we have $\lim_{n\to\infty}c(\Pi_n^\times)=\frac12\sqrt[4]{7}$.
\end{thm}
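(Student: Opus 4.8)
The plan is to exploit the periodic structure of $\Pi_n^\times$ via a transfer matrix, handling the global connectivity constraint by tracking how the chosen vertices on each interface split into components. Grouping the vertices into $n$ blocks $B_k=\{v_{2k-1},v_{2k},w_{2k-1},w_{2k}\}$, one checks directly from the edge list that each $B_k$ induces a $4$-cycle $v_{2k-1}v_{2k}w_{2k-1}w_{2k}$ in which the two ``entry'' vertices $v_{2k-1},w_{2k-1}$ (joined to $B_{k-1}$) are antipodal, as are the two ``exit'' vertices $v_{2k},w_{2k}$ (joined to $B_{k+1}$ by the rungs $v_{2k}v_{2k+1}$ and $w_{2k}w_{2k+1}$). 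Thus $\Pi_n^\times$ is a cyclic chain of $n$ copies of $C_4$ glued across $2$-vertex interfaces. I would process the blocks in order, with transfer-matrix states recording, for the exit pair $(v_{2k},w_{2k})$, which of the two vertices lie in a candidate set $S$ and whether they currently lie in the same component of the part of $S$ seen so far, while forbidding ``dead'' components that can no longer connect forwards. This yields a small explicit integer matrix $M$ (on the order of four active states), whose Perron eigenvalue I expect to be exactly $7$; this is the source of the base $7$ in the statement.

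To deal with the cyclic boundary cleanly I would cut the two rung edges $e_1=v_{2n}v_1$ and $e_2=w_{2n}w_1$, obtaining a linear band $L_n$ on the same vertex set, and split the count according to connectivity in $L_n$: a connected set $S$ of $\Pi_n^\times$ is either already connected in $L_n$, or else disconnected in $L_n$ but reconnected by $e_1$ and/or $e_2$. The first class is exactly the set $\conn[L_n]$ of connected sets of $L_n$, of cardinality $N(L_n)$, and $N(L_n)$ is read off from $M$ as a standard initial-vector, $M^{n}$, final-vector expression whose closed form has leading term $7^n$; this supplies the first summand.

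For the second class the key observation is that, since both $e_1$ and $e_2$ cross the single seam, adding them can reduce the number of components of $G[S]$ in $L_n$ by at most two, and a short case analysis (using that every connected set of $L_n$ meets a contiguous interval of blocks, and that no single $L_n$-component can touch both ends without spanning all blocks) shows $G[S]$ in $L_n$ must consist of exactly two components, one reaching the block-$n$ terminals $\{v_{2n},w_{2n}\}$ and the other reaching the block-$1$ terminals $\{v_1,w_1\}$, bridged by a present rung. Counting these amounts to a convolution of two end-conditioned block-counts, each of exponential order $7^{\ell}$ in its length $\ell$; because the two factors share the same base, their Cauchy product $\sum_{\ell_1+\ell_2=L}7^{\ell_1}7^{\ell_2}=(L+1)7^{L}$ introduces a linear factor, and this is precisely where the term $\tfrac{191}{3}n7^{n-2}$ arises, together with the lower-order correction $-\tfrac{16}{3}n$ coming from the boundary of the convolution and small configurations.

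Finally I would assemble the two contributions, pin down the exact rational coefficients from the entries of $M$ and its resolvent, and confirm the result against a base case (for instance $n=3$, with the independent value $N(\Pi_2^\times)=N(\Pi_4)=167$ available as a sanity check). The limiting value is then immediate: writing $N(\Pi_n^\times)=\Theta(n7^n)$ and $\abs{\Pi_n^\times}=4n$, the polynomial factor satisfies $\bigl(\Theta(n)\bigr)^{1/(4n)}\to1$, so $c(\Pi_n^\times)=\tfrac12 N(\Pi_n^\times)^{1/(4n)}\to\tfrac12\,7^{1/4}=\tfrac12\sqrt[4]7$. The main obstacle is the connectivity bookkeeping at the seam: correctly enumerating the two-component bridged configurations, rigorously excluding the three-component case, and carrying the exact constants through the convolution so as to obtain the precise coefficients $\tfrac{191}{3}$ and $-\tfrac{16}{3}$ rather than merely the leading asymptotics.
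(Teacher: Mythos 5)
Your overall strategy (a transfer matrix along the blocks, then cutting the seam and adding back the configurations reconnected by the two seam rungs) is a reasonable alternative to the paper's argument, which instead classifies sets by which interfaces $E_i=\{v_{2i}v_{2i+1},w_{2i}w_{2i+1}\}$ they are ``good'' for, observing that goodness at every interface forces connectivity while badness at two interfaces forces the occupied blocks to form an interval. However, your seam analysis contains a genuine error: it is not true that a connected set of $\Pi_n^\times$ which is disconnected in the cut-open band $L_n$ must split into exactly two $L_n$-components. Take $S=\{v_{2n},w_{2n},v_1,v_2,w_1\}$. In $\Pi_n^\times$ the induced subgraph is the path $v_{2n}v_1v_2w_1w_{2n}$ (using the edges $v_1v_2$, $v_2w_1$ and both seam rungs), so $S$ is connected; but in $L_n$ it has the three components $\{v_{2n}\}$, $\{w_{2n}\}$ and $\{v_1,v_2,w_1\}$, since $v_{2n}$ and $w_{2n}$ are antipodal in the block-$n$ four-cycle and hence non-adjacent. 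Adding two edges can reduce the number of components by two, and here it does.

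Worse, these three-component configurations are not a lower-order correction. One of the two components meeting the block-$n$ terminals can be a singleton (as above) while the other stretches back over many blocks, and the component meeting the block-$1$ terminals can independently stretch forward; the resulting convolution contributes $\Theta(n7^n)$ sets, the same order as your two-component count, so omitting them changes the coefficient of $n7^{n-2}$ rather than merely the error term. (It does not affect the limiting value $\tfrac12\sqrt[4]{7}$, but the theorem asserts an exact formula.) To repair the argument you would need to enumerate the three-component case as carefully as the two-component one --- noting, for instance, that two disjoint components cannot both cross two consecutive interfaces, which is what keeps this case finite-state --- or else adopt the paper's good/bad-interface bookkeeping, which sidesteps the seam entirely. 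Your concluding derivation of the limit from $N(\Pi_n^\times)=\Theta(n7^n)$ and $\abs{\Pi_n^\times}=4n$ is correct.
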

\begin{proof}
For each $i$ with $1\leq i\leq n$, write $V_i=\{v_{2i-1},v_{2i},w_{2i-1},w_{2i}\}$ and $E_i=\{v_{2i}v_{2i+1},\allowbreak w_{2i}w_{2i+1}\}$ (where we take subscripts modulo $2n$). Fix a set $A\subseteq V(\Pi_n^\times)$. We say $A$ is ``good'' for $E_i$ if $A$ contains both vertices of at least one edge in $E_i$, and is ``bad'' for $E_i$ otherwise. Whether $A$ is good for $E_i$ depends on $A\cap\{v_{2i},v_{2i+1}, w_{2i},w_{2i+1}\}$; there are $7$ choices of this intersection for which $A$ is good for $E_i$.

Suppose that $A$ is connected, but for some $i\neq j$ we have $A$ is bad for both $E_i$ and $E_j$. Since $E_i\cup E_j$ separates $V_{i+1}\cup\cdots\cup V_j$ from $V_{j+1}\cup\cdots\cup V_i$ (again, taking subscripts modulo $2n$), $A$ intersects exactly one of them. It follows that the values of $i$ for which $A$ intersects $V_i$ form an interval modulo $2n$. Further, if $A$ intersects every $V_i$ then it is good for all but at most one of the $E_i$, whereas otherwise  $A$ is good for $E_i$ if and only if it intersects both $V_i$ and $V_{i+1}$. 

Notice that if $A$ is good for $E_i$ for every $i$, then $A$ is necessarily connected. This is because for any choice of one edge from each $E_i$, the vertices of these edges induce a cycle in $\Pi_n^\times$, and every other vertex is adjacent to a vertex of this cycle. Similarly, taking one edge from each of $E_1,\ldots, E_i$ for $i\leq n-1$, the vertices of these edges induce a path, and every vertex of $V_2\cup\cdots\cup V_{i-1}$ is adjacent to or on the path, so if $A$ intersects precisely $V_1,\ldots,V_{i+1}$ and is good for $E_1,\ldots,E_i$ then connectedness of $A$ depends only on how $A$ intersects $\{v_1,v_2,v_3,w_1,w_2,w_3\}$ and $\{v_{2i},v_{2i+1},v_{2i+2},w_{2i},w_{2i+1},w_{2i+2}\}$. In particular, the intersection with the former cannot be $\{v_2,v_3,w_2\}$ or $\{v_2,w_2,w_3\}$, and similarly for the latter. 

It follows that for $2\leq i\leq n-2$ the number of connected sets which are good for precisely $E_1,\ldots,E_i$ is $4\times4\times7^i-2\times4\times 2\times7^{i-1}+2\times 2\times 7^{i-2}=676\times 7^{i-2}$. For $i=1$ the two ends are not independent, and there are $95$ connected sets. For $i=n-1$ we have fewer choices for the ends since we require the set not to be good for $E_n$, and there are $333\times 7^{n-3}$ connected sets. Each of these sets corresponds to $n$ connected sets obtained by rotation. Finally, there are $7^n$ connected sets which are good for all the $E_i$, and $13n$ which are bad for all the $E_i$
(those entirely contained in some $V_i$). Summing all these terms gives the required formula.
\end{proof}
\begin{rem}We may likewise calculate the total order of these connected sets, to obtain 
\[D(\Pi_n^\times)=\frac{\frac{3820}{3}n^27^{n-3}-\frac{292}{9}n7^{n-3}+\frac{320}{9}n}{4n(7^n+\frac {191}{3}n7^{n-2}-\frac{16}{3}n)}=\frac57-o(1).\] 
We omit the details since bounding the connected set density is not the aim of this section.
\end{rem}

Next we prove an upper bound on the number of connected sets of an $n$-vertex cubic graph. Note that $c(G)=\sqrt[n]{N(G)/2^n}=\sqrt[n]{\prob{S\text{ connected}}}$, where $S$ is a subset of $V(G)$ chosen uniformly at random, \ie each vertex is independently chosen to be in $S$ with probability $1/2$. (Recall that we do not consider $\varnothing$ to be a connected set, although it is a possible choice of $S$.) Our approach will be to bound this probability by $\prob{I(S)=0}$ where $I(S)$ is the number of isolated vertices in the subgraph induced by $S$. Since there are $n$ connected sets which induce an isolated vertex, but there are at least $e(G)(e(G)-13)/2$ four-vertex sets which are not connected but have no isolated vertex, we have $\prob{S\text{ connected}}\leq\prob{I(S)=0}$ for $n\geq 10$.

We will bound this probability by revealing vertices one-by-one in an adaptive order. To that end, we first prove a general lemma which applies to similar revelation processes. 

\begin{lem}\label{lem:game}Suppose $X$ is a fixed random variable which takes non-negative integer values, is absolutely bounded and satisfies $\prob{X=0}<1$. Let $X_1,\ldots$ be a sequence of independent copies of $X$, and let $p_n$ be the probability that we have $\sum_{i=1}^kX_i\geq n$ before $X_k=0$. Then $p_n=\Theta(z^{-n})$, where $z$ is the unique positive solution of $\mean{z^X}=1+\prob{X=0}$.\end{lem}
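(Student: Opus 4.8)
The plan is to reformulate $p_n$ as a first-passage probability for a killed process and extract its exponential decay rate from a rational generating function. Write $q=\prob{X=0}$ and $f(z)=\mean{z^X}$, which is a polynomial since $X$ is bounded. The revelation process dies at the first index $\tau$ with $X_\tau=0$, so we win precisely when the total $W:=\sum_{i<\tau}X_i$ of the draws preceding the first zero satisfies $W\ge n$; thus $p_n=\prob{W\ge n}$. Because the partial sums strictly increase until the first zero, it is convenient to track the ladder probabilities $a_j$, the probability that the running sum takes the exact value $j$ at some point before a zero is drawn. Conditioning on the last value $i<j$ visited before reaching $j$ gives the renewal recursion $a_j=\sum_{i=0}^{j-1}a_i\prob{X=j-i}$ for $j\ge1$, with $a_0=1$, whose generating function $A(z):=\sum_{j\ge0}a_jz^j$ therefore satisfies $A(z)\bigl(1+q-f(z)\bigr)=1$, i.e.\ $A(z)=\tfrac{1}{1+q-f(z)}$.

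Next I would record the two consequences I need. Conditioning on the first crossing of level $n$ yields the finite-window identity
\[p_n=\sum_{i=0}^{n-1}a_i\,\prob{X\ge n-i}=\sum_{k=1}^{M}a_{n-k}\,\prob{X\ge k}\qquad(n\ge M),\]
where $M$ is the largest value attained by $X$ (finite, as $X$ is bounded); this lets me transfer asymptotics from $(a_j)$ to $(p_n)$. It then remains to locate the dominant singularity of the rational function $A$. Its poles are exactly the roots of $f(z)=1+q$. On $(0,\infty)$ the polynomial $f$ is continuous and strictly increasing (as $\prob{X\ge1}>0$), with $f(1)=1<1+q$ and $f(z)\to\infty$, so there is a unique positive solution $z$; moreover $z>1$ and $f'(z)>0$, so $z$ is a simple root. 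This is precisely the $z$ in the statement.

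I would then argue that $z$ is the \emph{dominant} pole. Since the $a_j$ are non-negative, Pringsheim's theorem forces the radius of convergence of $A$ — equivalently the smallest modulus among its poles — to be itself a pole on the positive real axis; as $z$ is the only positive real root of $f(z)=1+q$, this smallest-modulus pole must be $z$. A partial-fraction (singularity) analysis of $A$ then gives $a_j=\Theta(z^{-j})$ in the aperiodic case, and more generally $a_{jd}=\Theta(z^{-jd})$ along multiples of $d=\gcd\{k\ge1:\prob{X=k}>0\}$. Feeding this into the windowed identity gives $p_n=\Theta(z^{-n})$: the upper bound is immediate since each of the $M$ terms is $O(z^{-n})$, and for the lower bound one selects, within the window, an index $k\in\{1,\dots,d\}$ with $k\equiv n\pmod d$ and $\prob{X\ge k}>0$, for which $a_{n-k}=\Theta(z^{-n})$.

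The main obstacle is exactly this last point, the possible periodicity of $X$. If $X$ is supported on a proper sublattice, then the point probabilities $a_j$ (and $w_j=q\,a_j=\prob{W=j}$) vanish off that lattice and are \emph{not} $\Theta(z^{-j})$, so one cannot read off $p_n$ from a single coefficient. The conclusion nevertheless survives because $p_n$ is a \emph{tail}: summing over a window of length $M\ge d$ always captures a residue class on which the coefficients attain the full order $z^{-n}$, so the periodicity is smoothed out and the stated clean $\Theta$ behaviour holds for every admissible $X$. Verifying this smoothing rigorously — rather than relying on the coefficientwise asymptotics available only in the aperiodic case — is where the care is needed.
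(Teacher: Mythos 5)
Your argument is correct in outline but takes a genuinely different, and considerably heavier, route than the paper. The paper works directly with the tail probabilities: conditioning on $X_1$ gives $p_n=\sum_{i=1}^r\prob{X=i}\,p_{n-i}$ for $n\ge r$ (with $r$ the maximum value of $X$), and the substitution $q_n=z^np_n$ turns this into $q_n=\sum_{i=1}^r\prob{X=i}z^i\,q_{n-i}$, which by the defining property of $z$ is a convex combination of earlier terms; a one-line induction then traps $q_n$ between the minimum and maximum of $q_0,\dots,q_{r-1}$ (all positive, since $p_n>0$), giving $p_n=\Theta(z^{-n})$ with no analytic machinery and, crucially, no periodicity issue. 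You instead pass to the point (ladder) probabilities $a_j$, identify their rational generating function $1/(1+q-f(z))$, and recover $p_n$ via the windowed identity $p_n=\sum_{k=1}^Ma_{n-k}\prob{X\ge k}$ --- all of which is correct --- but you are then obliged to do singularity analysis, and the periodicity problem you flag is real: when $d=\gcd\{k\ge1:\prob{X=k}>0\}>1$ there are $d$ poles of minimal modulus, namely $z\omega$ with $\omega^d=1$ (a triangle-inequality argument shows these are the only ones, and each is simple since $f'(z\omega)=\omega^{-1}f'(z)\neq0$), so $a_j=\Theta(z^{-j})$ only along $j\equiv0\pmod d$. Your proposed repair does close the gap: since $d$ is at most the minimum of the positive support, the index $k\in\{1,\dots,d\}$ with $k\equiv n\pmod d$ lies in the window and satisfies $\prob{X\ge k}\ge\prob{X\ge d}>0$, so the tail sum always picks up a term of full order $z^{-n}$. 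Thus your proof can be completed, but this periodicity bookkeeping is exactly the complication the paper's elementary sandwich argument is designed to avoid; what your approach buys in exchange is sharper information, namely the precise asymptotic constant $p_n\sim Cz^{-n}$ in the aperiodic case rather than just a two-sided $\Theta$ bound. (One small edge case: your existence argument for $z$ uses $f(1)=1<1+q$, which fails when $\prob{X=0}=0$; there $z=1$ and the lemma is trivial, but it is worth noting.)
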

\begin{proof}Write $A$ for the event that we have $\sum_{i=1}^kX_i\geq n$ before $X_k=0$. Suppose that the maximum possible value of $X$ is $r$. Then, for every $n\geq r$, we have $\prob{A\mid X_1=0}=0$ and $\prob{A\mid X_1=i}=p_{n-i}$ for each $i>0$, giving $p_n=\sum_{i=1}^r\prob{X=i}p_{n-i}$. (Here we take $p_0=1$.)

Existence and uniqueness of $z$ follow since the probability generating function is continuous, strictly increasing, unbounded, and takes the value $1\leq 1+\prob{X=0}$ at $1$. Set $q_n=z^np_n$ for each $n$, and note that $q_n>0$. Let $c_1=\min_{0\leq i<r}q_i$ and $c_2=\max_{0\leq i<r}q_i$. We show by induction that $c_1\leq q_n\leq c_2$ for all $n$; this is true by definition for $n<r$. If $n\geq r$ we have $q_n=\sum_{i=1}^r\prob{X=i}z^iq_{n-i}$; by the induction hypothesis \[c_1\sum_{i=1}^r\prob{X=i}z^i\leq q_n\leq c_2\sum_{i=1}^r\prob{X=i}z^i.\]
The result follows since, by choice of $z$, we have $\sum_{i=1}^r\prob{X=i}z^i=\mean{z^X}-\prob{X=0}=1$.
\end{proof}
We wish to apply this in a setting where the variables $X_i$ are not necessarily independent or identically distributed, but in some sense $X$ gives the worst-case distribution. We make this precise in the following corollary.
\begin{cor}\label{cor:game}Suppose that $X$ is as above, and $X_1,\ldots$ is a sequence of random variables with the following properties: $X_1$ is absolutely bounded with $\prob{X_1=0}<1$; and for each $k\geq 2$ we have $X_k\mid\mathcal F_{k-1}\leq_{st} X$, where $\mathcal F_{k-1}$ is the $\sigma$-algebra generated by $X_1,\ldots,X_{k-1}$. Then, defined as above, $p_n=O(z^{-n})$.
\end{cor}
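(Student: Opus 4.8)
The plan is to show that replacing the adapted sequence $(X_k)$ by an i.i.d.\ sequence distributed as $X$ can only increase the success probability, so that the sharp estimate $\Theta(z^{-n})$ of Lemma~\ref{lem:game} becomes the one-sided bound $O(z^{-n})$ here. This is the ``right'' direction of domination because the event in question is favoured both by large increments and by the avoidance of a zero step, and the hypothesis $X_k\mid\mathcal F_{k-1}\leq_{st}X$ makes each increment stochastically smaller; in particular it makes a zero step at least as likely, since $\prob{X_k=0\mid\mathcal F_{k-1}}\geq\prob{X=0}$. Thus every feature of the dominated process works against reaching $n$ before a zero, and one expects the i.i.d.\ process to be extremal. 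Rather than couple whole trajectories, I would encode this through a worst-case quantity.

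Introduce $f(n):=\sup\prob{A}$, the supremum taken over \emph{all} processes in which every step is dominated, i.e.\ $X_k\mid\mathcal F_{k-1}\leq_{st}X$ for all $k\geq1$ (the first step included). With the convention $f(m)=1$ for $m\leq0$, this $f$ is non-increasing in $n$, since for a fixed trajectory reaching the higher threshold $n+1$ before a zero implies reaching $n$ before a zero. For any process of this type, conditioning on $X_1$ gives $\prob{A}\leq\mean{g(X_1)}$, where $g(i):=f(n-i)$ for $i\geq1$ and $g(0):=0$ (the zero encoding immediate failure): indeed, after a positive first step the remainder is again a dominated process, so its conditional success probability is at most $f(n-X_1)$. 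The crucial observation is that $g$ is non-decreasing, including across the boundary value $g(0)=0\leq f(n-1)=g(1)$, because $f$ is non-increasing. Hence the defining property of $\leq_{st}$ applies and $\mean{g(X_1)}\leq\mean{g(X)}=\sum_{i=1}^r\prob{X=i}f(n-i)$. Taking the supremum over processes yields $f(n)\leq\sum_{i=1}^r\prob{X=i}f(n-i)$ for every $n\geq1$: precisely the recursion satisfied by $p_n$ in Lemma~\ref{lem:game}. A direct induction on $n$, with the shared base values $f(m)=p_m=1$ for $m\leq0$, then gives $f(n)\leq p_n=\Theta(z^{-n})$.

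It remains to pass from the fully dominated bound to the actual hypothesis, in which $X_1$ is only assumed bounded and is not dominated by $X$. I would again condition on the first step: given $X_1=i\geq1$, the tail $(X_2,X_3,\dots)$ is a dominated process in the above sense, so the conditional success probability is at most $f(n-i)\leq p_{n-i}=O(z^{-(n-i)})$, while $i=0$ contributes nothing. Since $X_1$ is bounded, say by $r_1$, and $z\geq1$ (as $\mean{z^X}=1+\prob{X=0}\geq1=\mean{1^X}$ forces $z\geq1$), each factor satisfies $z^{-(n-i)}=z^{-n}z^{i}\leq z^{r_1}z^{-n}$, and summing over the finitely many values of $i$ gives $p_n=O(z^{-n})$, as required.

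The main obstacle I anticipate lies not in the asymptotics but in getting the monotonicity-and-domination step exactly right: one must confirm that $i\mapsto f(n-i)$ is genuinely non-decreasing (including at the boundary $i=0$, where failure on a zero is encoded by the value $0$), so that $\leq_{st}$ can be applied to the correct function, and that the recursion for $f$ holds for \emph{all} $n\geq1$ rather than merely $n\geq r$, so the induction closes with no exceptional small cases. By comparison, confining the undominated first step to the bounded constant factor $z^{r_1}$ is routine bookkeeping.
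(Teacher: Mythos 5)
Your argument is correct, but it proves the domination step by a genuinely different mechanism than the paper. The paper couples: it first reduces to the tail process starting at $X_2$ exactly as you do (absorbing the undominated, bounded first step into a constant factor), and then uses the stochastic dominance to construct, step by step, variables $X'_k\geq X_k$ with $X'_k\mid\mathcal F_{k-1}\overset{d}{=}X$ and $X'_2,X'_3,\ldots$ independent; since success for the $X_i$ implies success for the $X'_i$ pathwise (note $X_i\le X'_i$ also gives $X'_i=0\Rightarrow X_i=0$), Lemma~\ref{lem:game} applied to the i.i.d.\ copies finishes the proof. You avoid constructing a joint coupling altogether: you introduce the worst-case success probability $f(n)$ over all fully dominated processes, use only the functional characterisation of $\leq_{st}$ (applied to the non-decreasing function $i\mapsto f(n-i)$ with the value $0$ at $i=0$) to derive $f(n)\leq\sum_{i=1}^r\prob{X=i}f(n-i)$, and compare with the recursion $p_n=\sum_{i=1}^r\prob{X=i}p_{n-i}$ by induction. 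The trade-off: the paper's route is shorter but leans on the (standard, though stated somewhat informally) fact that the conditional couplings can be assembled without introducing dependence among the $X'_k$; your route replaces that measure-theoretic construction with an elementary dynamic-programming inequality, at the cost of having to verify the monotonicity of $i\mapsto f(n-i)$ across the boundary $i=0$ and that the recursion for $p_n$ extends to all $n\geq1$ under the convention $p_m=1$ for $m\leq0$ --- both of which you check correctly. Either proof is complete.
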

\begin{proof}Let $p'_n$ be the probability that $\sum_{i=2}^kX_i\geq n$ before $X_i=0$ for some $2\leq i\leq k$ (\ie the equivalent probability for the process starting with $X_2$ rather than $X_1$). For $n$ at least the maximum value of $X_1$, we obtain $p_n$ as the weighted average of $p'_{n-i}$ for a constant set of values of $i$. Thus it is sufficient to show $p'_n=O(z^{-n})$. The stochastic dominance condition implies that we may couple each $X_k$ for $k\geq 2$ with a variable $X'_k$ such that $X_k\leq X'_k$ and $X'_k\mid \mathcal F_{k-1}\overset{d}{=}X$; clearly we may do this without introducing additional dependence, meaning that $X'_2,\ldots$ are independent. Now if $\sum_{i=2}^kX_i\geq n$ before $X_i=0$ for some $2\leq i\leq k$ then clearly the same applies to the $X'_i$, and so Lemma \ref{lem:game} applied to the latter gives an upper bound on $p'_n$.
\end{proof}
For $d\geq 2$ let $\mathcal{G}_d^-$ be the family of connected graphs with all degrees at most $d$.
\begin{thm}We have $c_d\leq\limsup_{G\in \mathcal G_d^-}c(G)\leq y_d$, where $y_d<1$ is the unique positive root of $2^{d+1}y^{2d-1}-2^dy^{2d-2}+1-2^d=0$.\end{thm}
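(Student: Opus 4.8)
The plan is to bound $c(G)^n=\prob{S\text{ connected}}$, for $S$ a uniform random vertex subset, by the probability $\prob{I(S)=0}$ that $G[S]$ has no isolated vertex (valid for $n\geq 10$ as noted above), and then to estimate $\prob{I(S)=0}$ by an adaptive vertex-revelation process to which Corollary \ref{cor:game} applies. Concretely, I would reveal the vertices of $G$ in an adaptively chosen order, recording at each step the number $X_k$ of newly revealed vertices, and arranging that a step returns $X_k=0$ precisely when it exhibits a vertex of $S$ all of whose neighbours lie outside $S$, i.e. an isolated vertex. If $I(S)=0$ then no step ever returns $0$ and the process reveals all $n$ vertices, so $\{I(S)=0\}$ is contained in the event that $\sum_k X_k$ reaches $n$ before any $X_k=0$. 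Provided $X_k\mid\mathcal F_{k-1}\leq_{st}X$ for a fixed worst-case variable $X$, Corollary \ref{cor:game} then gives $\prob{I(S)=0}=O(z^{-n})$ with a constant depending only on $d$, whence $c(G)\leq C^{1/n}z^{-1}\to z^{-1}$ uniformly over $\mathcal G_d^-$, giving $\limsup c(G)\leq 1/z$.

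For the worst-case variable: the natural audit of a single vertex $u$ reveals $u$ (so $u\notin S$, probability $\tfrac12$, costs one vertex and is automatically safe) and, when $u\in S$, inspects its at most $d$ neighbours, failing exactly when all of them lie outside $S$. For a degree-$d$ vertex with freshly revealed neighbourhood this gives conditional failure probability $\tfrac12\cdot 2^{-d}=2^{-(d+1)}$, and I would take $X$ supported on $\{0,1,2d-1\}$ with $\prob{X=0}=2^{-(d+1)}$, $\prob{X=1}=\tfrac12$ and $\prob{X=2d-1}=(2^d-1)2^{-(d+1)}$, the value $2d-1$ being the largest number of vertices a surviving step reveals. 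A direct check shows $\mean{z^X}=2^{-(d+1)}+\tfrac12 z+\tfrac{2^d-1}{2^{d+1}}z^{2d-1}$, so the defining equation $\mean{z^X}=1+\prob{X=0}$ of Lemma \ref{lem:game} reduces to $2^dz+(2^d-1)z^{2d-1}=2^{d+1}$; substituting $z=1/y$ and clearing denominators turns this into $2^{d+1}y^{2d-1}-2^dy^{2d-2}+1-2^d=0$, so $y_d=1/z$ is exactly the stated root. Since $\prob{X=0}>0$, the value $\mean{z^X}=1+\prob{X=0}$ exceeds $\mean{1^X}=1$, so by monotonicity $z>1$ and hence $y_d<1$; existence and uniqueness come from the same monotonicity used in Lemma \ref{lem:game}. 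The first inequality $c_d\leq\limsup_{G\in\mathcal G_d^-}c(G)$ is immediate since the graphs defining $c_d$ lie in $\mathcal G_d^-$.

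The main obstacle is designing the revelation order and the per-step batch so that the conditional domination $X_k\mid\mathcal F_{k-1}\leq_{st}X$ genuinely holds for every $G\in\mathcal G_d^-$ and every history. Two features must be controlled. First, a vertex of degree less than $d$, or one some of whose neighbours are already revealed and outside $S$, only increases the conditional failure probability and decreases the reveal count, which is harmless for the domination; so the extremal behaviour is that of a $d$-regular graph, matching the asymptotic sharpness one expects from regular graphs. Second, and more delicately, if the audited vertex has a neighbour already known to lie in $S$ then it is certified for free, driving the conditional failure probability below $2^{-(d+1)}$ and so violating $X_k\leq_{st}X$; to prevent this I would choose the order so that each audited vertex is tested against an essentially fresh neighbourhood, which is precisely what forces a surviving step to commit up to $2d-1$ vertices (the audited vertex together with enough of its neighbourhood and the next frontier) rather than merely $d+1$. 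Verifying that such an order exists for every subcubic, and more generally every $\mathcal G_d^-$, graph, and that the resulting increments are stochastically dominated by $X$, is the crux; once this is in place the conclusion follows from Corollary \ref{cor:game} as above.
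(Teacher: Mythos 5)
Your setup is the same as the paper's: compare $\prob{S\text{ connected}}$ with $\prob{I(S)=0}$, run an adaptive revelation process, dominate the increments by the three-point variable $X$ on $\{0,1,2d-1\}$ with $\prob{X=0}=2^{-(d+1)}$, and convert $\mean{z^X}=1+\prob{X=0}$ into the stated polynomial via $y=1/z$. That part, including the algebra, is correct. But you have explicitly deferred the crux --- constructing a revelation order for which $X_k\mid\mathcal F_{k-1}\leq_{st}X$ actually holds --- and that deferral is a genuine gap, because the difficulty you identify (an audited vertex with a neighbour already known to be in $S$ has failure probability $0$, breaking the domination at $t=1$) is not resolved by ``testing each vertex against a fresh neighbourhood''. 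Indeed you cannot keep all neighbourhoods fresh: as the process spreads through a connected graph, many unrevealed vertices will inevitably acquire neighbours known to be in $S$. The paper's resolution is a bookkeeping change, not an ordering trick: call a vertex \emph{safe} if it is known to be out of $S$ or has a neighbour known to be in $S$, let $X_k$ count vertices \emph{newly marked safe} (not vertices revealed), and declare success when every vertex is safe. Vertices certified for free are then credited to the stage that certified them and are never audited; each stage audits an \emph{unsafe} vertex $v$, which by definition has every already-revealed neighbour out of $S$, so the conditional failure probability is $2^{-(1+r)}\geq 2^{-(d+1)}$ with $r\leq d$ the number of unrevealed neighbours, and the domination holds. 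Note also that your statement that on the event $I(S)=0$ ``the process reveals all $n$ vertices'' is false under the correct accounting --- some vertices are never revealed, only marked safe --- and it is precisely the safe-count, not the reveal-count, that reaches $n$.

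One further point you would need and do not supply: a stage in which $v\in S$ and some $w_j\in S$ marks safe $N(v)$, $N(w_j)$ and $v$ itself, which a priori can be as many as $2d$ new vertices, exceeding the support of your $X$. The paper caps this at $2d-1$ by always choosing, when possible, an unsafe $v$ with at least one \emph{safe} neighbour (so at most $d-1$ of $N(v)$ are newly marked); connectivity guarantees such a choice exists in every stage after the first, and the first stage's possible value $2d$ is exactly what the weaker hypothesis on $X_1$ in Corollary \ref{cor:game} is there to absorb. Without these two devices the domination you assume does not hold, so as written the argument is incomplete at the step you yourself flag as the crux.
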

\begin{proof}It is sufficient to show that $\prob{I(S)=0}=O(y_d^n)$, where $G\in\mathcal G_d^-$ has $n$ vertices and $S$ is a uniformly random subset of $V(G_n)$. We bound this probability by revealing whether or not vertices are in $S$ one by one, stopping if we have revealed that some vertex is in while all its neighbours are out or if we have revealed enough to know that no such vertex exists. We mark a vertex as ``safe'' if either it is known to be out or it has a neighbour which is known to be in; we say that a vertex is ``unsafe'' if it has not yet been marked safe.

We reveal vertices in stages according to the following strategy. At the start of each stage, if all vertices are safe, end the process with success. Otherwise choose an unsafe vertex $v$, and if possible choose a $v$ with at least one safe neighbour. By connectivity, this is possible provided there is a safe vertex, which will be the case in every stage except the first. If $v$ has $r>0$ unrevealed neighbours, let them be $w_1,\ldots, w_{r}$. We will ensure that at the start of each stage all revealed vertices will be marked safe, so that our choice of $v$ is necessarily unrevealed. 

First reveal $v$; if it is not in $S$, mark $v$ safe and move on to the next stage. If $v$ is in $S$, mark all its neighbours as safe and reveal $w_1,\ldots,w_{r}$ one by one; if any of these is in $S$, stop revealing immediately, mark its neighbours as safe, and proceed to the next stage. If none of them are in $S$ (including the case $r=0$), then $v$ is isolated, so end the process with failure. Note that if we proceed to the next stage, the only vertices revealed to be in $S$ during this stage are $v$ and the last $w_i$ to be revealed, both of which are now safe.

Writing $X_i$ for the number of vertices marked as safe in stage $i$, but setting $X_i=0$ if stage $i$ ends in failure, this process succeeds if and only if $\sum_{i=1}^kX_i=n$ before the first time $X_k=0$. Note that in each stage, no matter what has previously happened so long as we have not yet ended the process, with probability $1/2$ we reveal $v\not\in S$ and have $X_i=1$; with probability $(1/2)^{1+r}$ we end the process with failure; and with the remaining probability we mark as safe all of $w_1,\ldots,w_r$ and any neighbours of some $w_j$ not previously marked as safe (including $v$). In this final case we have $X_i\leq 2d-1$ except possibly in the first stage where we have $X_1\leq 2d$. Since $r\leq d$ it follows that $X_i\mid\mathcal{F}_{i-1}\leq_{st}X$ for each $i\geq 2$, where
\[X=\begin{cases}0&\text{with probability }\frac{1}{2^{d+1}}\\
1&\text{with probability }\frac12;\\
2d-1&\text{with probability }\frac{2^d-1}{2^{d+1}}.\end{cases}\]
Thus Corollary \ref{cor:game} applies with $z$ being the unique positive root of $2^dz+(2^d-1)z^{2d-1}=2^{d+1}$; since the LHS is clearly smaller for $z\leq 1$ we have $z>1$. This implies the required result since setting $y=z_d^{-1}$ in this equation gives $y=y_d$.
\end{proof}
Computation of $y_3$, together with Theorem \ref{thm:criss-cross}, gives the following bounds.
\begin{cor}We have $0.813<c_3<0.9781$.\end{cor}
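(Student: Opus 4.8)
The plan is to read off both inequalities directly from the two theorems already established in this section, specialised to $d=3$, and then to verify the two resulting numerical estimates.

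For the lower bound, the key observation is that each criss-cross prism $\Pi_n^\times$ is a cubic graph and hence lies in $\mathcal{G}_3$, so by definition $c_3=\limsup_{G\in\mathcal{G}_3}c(G)\geq\lim_{n\to\infty}c(\Pi_n^\times)$. Theorem \ref{thm:criss-cross} evaluates this limit as $\tfrac12\sqrt[4]{7}$, so it remains only to check that $\tfrac12\sqrt[4]{7}>0.813$, equivalently that $7>(2\cdot0.813)^4=1.626^4$. Since $1.626^2<2.644$ and $2.644^2<6.991<7$, this inequality holds, giving $c_3>0.813$.

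For the upper bound I would observe that every cubic graph has maximum degree exactly $3$ and so belongs to $\mathcal{G}_3^-$; the preceding theorem therefore gives $c_3\leq y_3$, where $y_3<1$ is the unique positive root of the defining equation with $d=3$. Substituting $d=3$ into $2^{d+1}y^{2d-1}-2^{d}y^{2d-2}+1-2^d=0$ yields the quintic $16y^5-8y^4-7=0$. I would then isolate its root: writing $f(y)=16y^5-8y^4-7$, one checks by direct evaluation that $f(0.9781)>0$, while $f'(y)=16y^3(5y-2)>0$ for $y>\tfrac25$, so $f$ is strictly increasing on $(\tfrac25,1)$ and its unique root there satisfies $y_3<0.9781$.

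The computations are entirely elementary, and the only point requiring genuine care is the final root isolation, since the quintic has no convenient closed form. The estimate $y_3<0.9781$ is tight to within about $10^{-4}$ (indeed $f(0.978)<0$), so the sign of $f$ at $0.9781$ must be computed to sufficient precision; establishing strict monotonicity through the sign of $f'$ on $(\tfrac25,1)$ then removes any ambiguity as to which side of $0.9781$ the root lies on. I expect this precision check to be the main—albeit minor—obstacle.
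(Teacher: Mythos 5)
Your proposal is correct and matches the paper's argument exactly: the paper derives the corollary by combining Theorem \ref{thm:criss-cross} (giving $c_3\geq\lim_n c(\Pi_n^\times)=\tfrac12\sqrt[4]{7}>0.813$) with the computation of $y_3$ as the root of $16y^5-8y^4-7=0$ from the preceding theorem. Your numerical verifications (including the sign check $f(0.9781)>0$ and the monotonicity of $f$ on $(\tfrac25,1)$) are accurate and simply make explicit what the paper leaves as ``computation of $y_3$''.
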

Finally, we note that no upper bound on the degree of a regular graph that is weaker than a constant bound is sufficient to give a global bound on $c(G)$. 
\begin{prop}\label{large-degree}For any function $f(n)=\omega(1)$, there are graphs $G_n$ of order $n$ which are $d$-regular for $d\leq f(n)$ with $c(G_n)\to 1$. Also, for any fixed $d\geq 3$ there are $d$-regular graphs of every sufficiently large even order satisfying $D(G)>1-\eps(d)$, where $\eps(d)\to 0$ as $d\to\infty$.
\end{prop}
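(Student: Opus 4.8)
The two parts ask for constructions of opposite character, so I would handle them separately.

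\textbf{First part (slowly growing degree).} Here I would use powers of cycles. Given $f(n)=\omega(1)$, choose an even $d=d(n)$ with $d\to\infty$, $d\leq f(n)$ and (say) $d\leq\log n$, put $r=d/2$, and let $G_n$ be the graph on $\{0,\dots,n-1\}$ with $i\sim j$ iff their cyclic distance is at most $r$; this is $d$-regular. Recall $c(G_n)=\prob{S\text{ connected}}^{1/n}$ for a uniform random $S$. Two consecutive present vertices of $S$ lie in the same component precisely when the run of absent vertices between them has length less than $r$, so $G[S]$ is disconnected only when $S$ has at least two maximal absent runs of length $\geq r$; in particular $\prob{S\text{ connected}}\geq\prob{S\text{ has no absent run of length}\geq r}$. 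The number of binary strings of length $n$ with no run of $r$ zeros grows like $\lambda_r^{\,n}$, where $\lambda_r$ is the largest root of $x^{r}=x^{r-1}+\dots+1$, and $\lambda_r\to 2$ as $r\to\infty$ (indeed $2-\lambda_r=\Theta(2^{-r})$). A crude transfer-matrix bound gives $\prob{\text{no long run}}\geq(\lambda_r/2)^{n}\,2^{-r}$, whence $c(G_n)\geq(\lambda_r/2)\,2^{-r/n}$; since $r\to\infty$ and $r/n\to0$, this tends to $1$. The only routine points are the connectedness characterisation and a lower bound on the run-avoiding count that is uniform enough in $r$, neither of which is hard.

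\textbf{Second part (fixed degree, density near $1$).} Now I want the reverse: a $d$-regular graph whose connected sets are overwhelmingly the near-spanning ones. Writing $D(G)=\tfrac1n\sum_{v}\prob{v\in S\mid S\text{ connected}}$, and grouping connected sets by the number $j$ of vertices they omit (so $N_{n-j}$ of them omit exactly $j$ vertices), the goal is to make the co-size distribution concentrate near $0$, i.e.\ $\sum_j jN_{n-j}\big/\sum_jN_{n-j}=\eps(d)\,n$ with $\eps(d)\to0$. The plan is to build a ``long and thin'' $d$-regular graph as a cyclic chain of $m$ identical gadgets, each on a bounded number of vertices, joined so that the graph has $\Theta(n)$ small separators in series and $G[S]$ can span many consecutive gadgets only when $S$ keeps a high local density at each junction. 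Exactly as in the proof of Theorem~\ref{thm:criss-cross}, connectedness then reduces to a local condition at the junctions, and both $N(G)$ and $\sum_{S}\abs{S}$ can be evaluated, up to lower-order terms, by a transfer matrix over the gadgets; varying $m$ yields every sufficiently large even order.

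The crux, and what I expect to be the main obstacle, is engineering a single gadget that is simultaneously $d$-regular, forces the crossing density at its junctions to tend to $1$ as $d\to\infty$, and keeps the transfer matrix small enough to analyse. The tension is that raising $d$ normally makes local connectivity \emph{easier}, driving the density of typical connected sets towards $1/2$ (the very phenomenon behind the $D\to1/2$ result for minimum degree $\to\infty$); the gadget must therefore spend its extra edges in a way that creates no alternative crossing routes, so that absent vertices at a junction still tend to separate. Once a suitable gadget is fixed, the remaining steps are: (i) use the transfer matrix to show that connected sets spanning fewer than all gadgets are exponentially outnumbered, so the dominant contribution comes from (near-)spanning sets; (ii) compute the limiting typical density $\rho_d$ of a spanning connected set, as the density maximising the per-gadget entropy against the junction-crossing constraint, and verify $\rho_d\to1$; and (iii) conclude $D(G)\geq\rho_d-o(1)=1-\eps(d)$ with $\eps(d)\to0$. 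Step (ii) is where the real content lies.
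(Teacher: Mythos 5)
Your first part is correct and uses exactly the graphs the paper uses, namely even powers of cycles. The paper's counting is much lighter, though: $\{v_i:i\equiv 1\pmod k\}$ is a connected \emph{dominating} set of $C_n^k$ of order $O(n/k)$, every superset of a connected dominating set is connected, so $N(C_n^k)\geq 2^{n-O(n/k)}$ and $c(C_n^k)\geq 2^{-O(1/k)}\to 1$ immediately. Your run-avoidance count via the transfer matrix also works (and the uniformity in $r$ you flag is indeed routine), but it is doing more work than necessary.

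The second part is where the real problem lies, and it is not the gadget-engineering obstacle you identify but something more basic: the statement you are trying to prove, read literally with $D(G)$, is false, and the machinery of this very paper shows it. By \cite[Theorem 5]{KW91}, every connected graph of minimum degree $k\geq k_2$ has a spanning tree with at least $\bigl(1-3\frac{\log k}{k}\bigr)n$ leaves, hence $c(G)\geq 2^{-3\log k/k}$, and Theorem \ref{too-many-sets} then forces $D(G)\leq \alpha\bigl(2^{-3\log k/k}\bigr)+o(1)$, which tends to $1/2$ as $k\to\infty$. So for every sufficiently large fixed $d$, \emph{every} large $d$-regular graph has connected set density close to $1/2$, and no construction whatsoever can achieve $D(G)>1-\eps(d)$ with $\eps(d)\to 0$. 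You half-noticed this (``the very phenomenon behind the $D\to1/2$ result'') but treated it as a design difficulty to be engineered around rather than as an impossibility; your proposed search for a suitable gadget cannot terminate successfully. The ``$D$'' in the statement is evidently a slip for ``$c$'': the paper's own proof of the second part consists of the single remark that the first construction ``also proves the second statement for even $d$'' (i.e.\ $c(C_n^{d/2})\geq 2^{-O(1/d)}=1-\eps(d)$ already for \emph{fixed} even $d$), together with the Cartesian product $C_n^k\square K_2$ for odd $d=2k+1$ (a $(2k+1)$-regular graph on $2n$ vertices with a connected dominating set of order at most $n/k$, which also explains the ``even order'' proviso forced by parity); and the open-problems section cites this proposition precisely for the assertion that $c_d\to 1$ as $d\to\infty$. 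So the fix is not a cleverer gadget but to prove the corrected statement, which your part-one construction already yields once you observe that it works for fixed $d$ as well as for growing $d$.
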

\begin{proof}Note that $C_n^k$ (that is, the graph obtained from the cycle $v_1v_2\cdots v_n$ by adding edges $\{v_iv_j:\abs{i-j}\leq k\}$ and $\{v_iv_j:\abs{n+i-j}\leq k\}$) is a $2k$-regular graph with $n$ vertices and contains a connected dominating set $S$ of order at most $n/2k$ consisting of $\{v_i:i\equiv 1\pmod{k}\}$. Any superset of $S$ is also connected, and so $N(C_n^k)\geq 2^{n-n/2k}$, giving $c(C_n^k)\geq \sqrt[2k]{1/2}=1-o(1)$ provided $k=\omega(1)$, and we can choose such $k=k(n)$ with $k\leq f(n)$. This also proves the second statement for even $d$. Similarly, the Cartesian product $C_n^k\square K_2$ is a $2k+1$-regular graph on $2n$ vertices with a connected dominating set of order at most $n/k$, extending the proof to odd $d$.\end{proof}

\section{Bounding the connected set density away from 1}\label{sec:q8}
In this section we turn our attention to Question \ref{q8}, which asks for an upper bound on the connected set density for graphs with minimum degree at least $3$. Our approach is in some sense complementary to that of Section \ref{sec:q5}, since our upper bound on the connected set density will follow from a lower bound on $c(G)$. We first need some straightforward inequalities.

\begin{lem}\label{binomial}For $n\in\zp$ and $\alpha\in(1/2,1)$, set $\mathcal S_\alpha=\{S\subseteq [n]:\abs{S}\geq\alpha n\}$. Then we have 
\[\abs{\mathcal S_\alpha}<\frac{\alpha}{2\alpha-1}\bigl(\alpha^{-\alpha}(1-\alpha)^{\alpha-1}\bigr)^n,\]
and the average order of elements of $\mathcal S_\alpha$ is less than $\alpha n+1+\frac{\alpha(1-\alpha)}{(2\alpha-1)^2}$.
\end{lem}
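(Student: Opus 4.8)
The plan is to derive both bounds from the single observation that, once $\alpha>1/2$, the binomial coefficients $\binom{n}{k}$ with $k\geq\alpha n$ decay at least geometrically. Set $m=\ceil{\alpha n}$ and $r=(1-\alpha)/\alpha$, noting that $r\in(0,1)$ precisely because $\alpha>1/2$. The first step I would carry out is the ratio estimate: for every $k\geq m$ we have $\binom{n}{k+1}/\binom{n}{k}=(n-k)/(k+1)<(1-\alpha)n/(\alpha n)=r$, since $n-k\leq(1-\alpha)n$ while $k+1>\alpha n$. Iterating gives $\binom{n}{m+j}\leq r^j\binom{n}{m}$ for all $j\geq0$ (with strict inequality once $j\geq1$), which is the engine for both parts.

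For the cardinality bound I would then compare to a full geometric series: writing $\abs{\mathcal S_\alpha}=\sum_{j=0}^{n-m}\binom{n}{m+j}\leq\binom{n}{m}\sum_{j=0}^{n-m}r^j<\binom{n}{m}\sum_{j\geq0}r^j=\binom{n}{m}/(1-r)$, where the final inequality is strict because the omitted geometric tail is positive. Since $1-r=(2\alpha-1)/\alpha$, the prefactor is exactly $\alpha/(2\alpha-1)$, so it remains to bound the single coefficient $\binom{n}{m}$ by $(\alpha^{-\alpha}(1-\alpha)^{\alpha-1})^n$. The cleanest route is the tilted-measure inequality: comparing one term to the whole expansion of $(\alpha+(1-\alpha))^n=1$ gives $\binom{n}{m}\alpha^m(1-\alpha)^{n-m}\leq1$, hence $\binom{n}{m}\leq\alpha^{-m}(1-\alpha)^{-(n-m)}=(1-\alpha)^{-n}\bigl((1-\alpha)/\alpha\bigr)^m$. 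As the base $(1-\alpha)/\alpha$ lies below $1$ and $m\geq\alpha n$, this is largest at $m=\alpha n$, where it equals $(\alpha^{-\alpha}(1-\alpha)^{\alpha-1})^n$; substituting yields the claimed strict bound.

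For the average order I would split the mean as its base layer plus an excess. Denoting the average by $\bar k$, we have $\bar k=m+\bigl(\sum_{j\geq0}j\binom{n}{m+j}\bigr)\big/\bigl(\sum_{j\geq0}\binom{n}{m+j}\bigr)$. Bounding the numerator above by $\binom{n}{m}\sum_{j\geq0}jr^j=\binom{n}{m}\,r/(1-r)^2$ and the denominator below by its $j=0$ term $\binom{n}{m}$, the excess is at most $r/(1-r)^2$; a short simplification gives $r/(1-r)^2=\alpha(1-\alpha)/(2\alpha-1)^2$. Since $m=\ceil{\alpha n}<\alpha n+1$, this delivers $\bar k<\alpha n+1+\alpha(1-\alpha)/(2\alpha-1)^2$, exactly as required, with strictness coming from the estimate $\ceil{\alpha n}<\alpha n+1$ even though the excess bound itself is only non-strict.

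None of the individual steps is deep; the point deserving the most care is the passage from $\binom{n}{m}$ to the entropy-type quantity $(\alpha^{-\alpha}(1-\alpha)^{\alpha-1})^n$, because $m=\ceil{\alpha n}$ generally exceeds $\alpha n$ and one must check that the monotonicity in $m$ runs in the favourable direction — which it does, again by virtue of $\alpha>1/2$ forcing $(1-\alpha)/\alpha<1$. I would also track strictness explicitly throughout: the geometric-series comparison in the cardinality bound is strict because of the positive discarded tail, and the average-order bound is strict because $\ceil{\alpha n}<\alpha n+1$ always holds strictly, so both displayed inequalities come out strict as stated.
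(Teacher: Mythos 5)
Your proof is correct and follows essentially the same route as the paper's: both rest on the geometric decay of $\binom{n}{k}$ beyond $k=\ceil{\alpha n}$, sum the resulting geometric series for the cardinality and for the excess of the mean, and finish with an entropy-type bound on the single coefficient $\binom{n}{\ceil{\alpha n}}$. The only cosmetic difference is that you derive that last bound from the binomial theorem with tilt $\alpha$ plus a monotonicity check in $m$, whereas the paper quotes the Stirling-type estimate $\binom{n}{k}\leq(k/n)^{-k}(1-k/n)^{k-n}$; the two are standard and interchangeable here.
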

\begin{proof}
Write $k=\ceil{\alpha n}$, so that $\abs{\mathcal S_\alpha}=\binom nk+\binom{n}{k+1}+\cdots+\binom nn$. Note that for each $j\geq 0$ we have 
\[\binom n{k+j}=\binom nk\prod_{i=1}^j\frac{n-k-i+1}{k+i}<\binom nk\bfrac{n-k}{k}^j.\]
Since $n-k<k$ we have 
\[\abs{\mathcal S_\alpha}<\binom nk\sum_{j\geq 0}\bfrac{n-k}{k}^j=\binom nk\frac{k}{2k-n}.\]
Similarly, the total order of all sets in $\mathcal S_\alpha$ is 
\begin{align*}k\abs{\mathcal S_\alpha}+\sum_{j=0}^{n-k}j\binom n{k+j}&< k\abs{\mathcal S_\alpha}+\binom nk\sum_{j\geq 0}j\bfrac{n-k}{k}^j\\
&=k\abs{\mathcal S_\alpha}+\binom nk\frac{k(n-k)}{(2k-n)^2}.\end{align*}
Consequently, since $\abs{\mathcal S_\alpha}\geq\binom nk$, the average order of all these sets is below $k+\frac{k(n-k)}{(2k-n)^2}$. Note that for $n/2<x\leq n$, the expression $\frac{x(n-x)}{(2x-n)^2}$ is decreasing in $x$, and since $k\geq \alpha n$, we have $\frac{k(n-k)}{(2k-n)^2}\leq \frac{\alpha(1-\alpha)}{(2\alpha-1)^2}$. Since also $k<\alpha n+1$, the average order of sets in $\mathcal S_\alpha$ is less than $\alpha n+1+\frac{\alpha(1-\alpha)}{(2\alpha-1)^2}$, as required.

To complete the bound on $\abs{\mathcal S_\alpha}$, note that Stirling's approximations imply 
\[\binom nk\leq(k/n)^{-k}(1-k/n)^{k-n}\leq \bigl(\alpha^{-\alpha}(1-\alpha)^{\alpha-1}\bigr)^n\]
and that $\frac{k}{2k-n}\leq \frac{\alpha}{2\alpha-1}$.
\end{proof}
We now give a general upper bound on $D(G)$ in terms of $c(G)$.
\begin{thm}\label{too-many-sets}Fix $c\in(1/2,1)$ and let $G_n$ be an $n$-vertex graph with $c(G_n)\geq c$. Then $D(G_n)\leq \alpha+o(1)$ and $D(G_n)\geq1-\alpha-o(1)$, where $\alpha=\alpha(c)$ is the unique value in $(1/2,1)$ satisfying $\alpha^{-\alpha}(1-\alpha)^{\alpha-1}=2c$.\end{thm}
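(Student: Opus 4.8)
The plan is to exploit the trivial fact that every connected set is in particular a subset, so that the number of connected sets of any prescribed size is bounded by the corresponding binomial coefficient, and then to play these counts off against the lower bound on $N(G_n)$ supplied by the hypothesis, using Lemma \ref{binomial}. First I would translate the hypothesis: $c(G_n)\geq c$ means exactly $N(G_n)\geq(2c)^n$. Writing $\phi(x)=x^{-x}(1-x)^{x-1}$ for the exponential of the binary entropy, $\phi$ is continuous and strictly decreasing on $(1/2,1)$ from $\phi(1/2)=2$ to $\phi(1)=1$; this both confirms that $\alpha$ with $\phi(\alpha)=2c$ is well defined and unique, and supplies the key inequality $\phi(\beta)<\phi(\alpha)=2c$ whenever $\beta>\alpha$. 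The whole argument rests on this strict monotonicity: it guarantees that the subset count $\phi(\beta)^n$ is exponentially smaller than $N(G_n)\geq(2c)^n$ as soon as the size threshold is moved past $\alpha n$.

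For the upper bound, fix any $\beta\in(\alpha,1)$ and split $\sum_{S\in\conn}\abs S$ according to whether $\abs S<\beta n$. The small sets contribute at most $\beta n\,N(G_n)$, while the large ones contribute at most $n\,\abs{\{S\in\conn:\abs S\geq\beta n\}}\leq n\abs{\mathcal S_\beta}$. Dividing by $N(G_n)$ gives $D(G_n)\leq\beta+\abs{\mathcal S_\beta}/N(G_n)$, and Lemma \ref{binomial} together with $N(G_n)\geq(2c)^n$ bounds the error term by $\tfrac{\beta}{2\beta-1}\bigl(\phi(\beta)/(2c)\bigr)^n\to0$. Hence $\limsup_n D(G_n)\leq\beta$ for every $\beta>\alpha$, and letting $\beta\downarrow\alpha$ yields $D(G_n)\leq\alpha+o(1)$.

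For the lower bound I would argue that almost all connected sets are large, counting the small ones via complementation. Fix $\alpha'\in(\alpha,1)$. The map $S\mapsto[n]\setminus S$ is a bijection showing that the number of subsets of $[n]$ of size below $(1-\alpha')n$ equals the number of size above $\alpha' n$, hence is at most $\abs{\mathcal S_{\alpha'}}<\tfrac{\alpha'}{2\alpha'-1}\phi(\alpha')^n=o(N(G_n))$. Therefore all but a $o(1)$ fraction of the $N(G_n)$ connected sets have order at least $(1-\alpha')n$, giving $A(G_n)\geq(1-\alpha')n\bigl(1-o(1)\bigr)$ and so $D(G_n)\geq(1-\alpha')-o(1)$. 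Letting $\alpha'\downarrow\alpha$ gives $D(G_n)\geq1-\alpha-o(1)$.

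Given Lemma \ref{binomial}, no single step is genuinely hard; the result is really a concentration statement, namely that the constraint $N(G_n)\geq(2c)^n$ forces the connected-set sizes to pile up near $\alpha n$ from below. The two points needing a little care are the complementation step in the lower bound (it is the total subset count, not the connected-set count, that is symmetric), and the order of limits—keeping $\beta$, respectively $\alpha'$, fixed while $n\to\infty$ and only afterwards sending it to $\alpha$, so that the exponentially small error terms $\bigl(\phi(\beta)/(2c)\bigr)^n$ are legitimately $o(1)$. The main obstacle, insofar as there is one, is simply to verify that these error terms are subexponential relative to $N(G_n)$, which is exactly what the strict monotonicity of $\phi$ and the slack $\beta>\alpha$ provide.
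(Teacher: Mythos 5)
Your proof is correct and takes essentially the same route as the paper's: both translate the hypothesis into $N(G_n)\geq(2c)^n$ and play this off against the tail bound $\abs{\mathcal S_\beta}<\frac{\beta}{2\beta-1}\bigl(\beta^{-\beta}(1-\beta)^{\beta-1}\bigr)^n$ of Lemma \ref{binomial} for thresholds $\beta>\alpha$, handling the lower bound by complementation. The only difference is bookkeeping: the paper majorises the connected sets by the $(2c)^n$ largest subsets of $V(G_n)$ and so also needs the average-order estimate in Lemma \ref{binomial}, whereas your threshold split at $\beta n$ uses only the cardinality bound; both are sound.
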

\begin{proof}Since $c(G_n)\geq c$, the graph $G_n$ has at least $(2c)^n$ connected sets. The average order of these sets is at most that of the $(2c)^n$ largest subsets of $V(G_n)$. Fix $\alpha''>\alpha'>\alpha$. By choice of $\alpha$ we have $\alpha'^{-\alpha'}(1-\alpha')^{\alpha'-1}<2c$ and so Lemma \ref{binomial} gives $\abs{\mathcal S_{\alpha'}}<(2c)^n$ for $n$ sufficiently large. Consequently the $(2c)^n$ largest subsets of $V(G_n)$ consist of $\mathcal S_{\alpha'}$ together with some other smaller subsets, so have smaller average order than $\mathcal S_{\alpha'}$. Also, again by Lemma \ref{binomial}, the average order of sets in $\mathcal S_{\alpha'}$ is less than $\alpha''n$ for $n$ sufficiently large. Consequently, for any $\alpha''>\alpha$ we have $D(G_n)<\alpha''$ for $n$ sufficiently large, \ie $D(G_n)\leq \alpha+o(1)$. Repeating the argument for the $(2c)^n$ smallest sets, noting that the sets of size at most $(1-\alpha')n$ are the complements of sets in $\mathcal S_{\alpha'}$, gives the corresponding lower bound.
\end{proof}

We combine this with known results that give lower bounds on the number of connected sets. Vince observed \cite{Vin17} that if $G$ has a spanning tree with $\ell$ leaves then $N(G)\geq2^\ell$, since any set containing all internal vertices of the spanning tree is necessarily connected, and used a result of Kleitman and West \cite{KW91} (which they attribute to Linial and Sturtevant) that any $n$-vertex graph with minimum degree at least $3$ has a spanning tree with at least $2+n/4$ leaves to give an exponential lower bound on $N(G)$. We use the following extension of Kleitman and West's result due to Karpov \cite{Kar14}.
\begin{thm}[Karpov, \cite{Kar14}]\label{Karpov}If a graph $G$ has $s$ vertices with degree $1$ or $3$ and $t$ vertices with degree at least $4$, then $G$ has a spanning tree with at least $\ceil{s/4+t/3+3/2}$ leaves.\end{thm}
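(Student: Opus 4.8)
The plan is to construct the spanning tree greedily by an expansion process and to control the number of leaves by an amortised charging argument, following the strategy of Kleitman and West but assigning larger ``credit'' to vertices of higher degree. I would grow a subtree $F$ of $G$ one step at a time: maintaining $F$ together with a classification of its leaves as \emph{alive} (having at least one neighbour outside $F$) or \emph{dead}, I repeatedly pick an alive leaf $x$ and adjoin to $F$ every neighbour of $x$ not yet present, so that $x$ becomes internal and each adjoined vertex becomes a new leaf. When no alive leaf remains I attach any leftover vertices---which, since every leaf is now dead, can only be adjacent to internal vertices of $F$---individually, each as a fresh leaf, resuming expansion if an attached vertex is itself alive. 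By connectivity this terminates in a spanning tree, and the whole difficulty is to lower-bound its final leaf count.

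For the accounting I would assign to each vertex $v$ a target credit $w(v)$ equal to $1/4$ if $\deg(v)\in\{1,3\}$ and $1/3$ if $\deg(v)\geq 4$, and prove by induction on the number of steps that the number of leaves of $F$ is always at least $3/2$ plus the total credit $\sum_{v\in F}w(v)$, the constant being seeded by starting the process at a vertex of maximum degree. The engine of the induction is the identity $L=2+\sum_{v\text{ internal}}(\deg_T(v)-2)$ for the leaf count $L$ of a tree, which shows that converting a vertex of tree-degree $\delta$ into an internal vertex contributes $\delta-2$ net new leaves. Expanding an alive leaf $x$ of graph-degree $d$ with $k$ free neighbours turns $x$ internal (tree-degree $k+1$) and creates $k$ new leaves; when $k\geq 2$ the net gain of $k-1$ leaves comfortably covers the credit of the newly adjoined vertices, so the potential rises.

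The main obstacle is calibrating the scheme so that the two coefficients $1/4$ and $1/3$ emerge exactly, rather than a single uniform weight. The dangerous local cases are (i) the expansion of a leaf with only one free neighbour ($k=1$), which creates no net leaf yet adds credit, and (ii) the final attachment phase, where leftover vertices become leaves having served no other purpose. Both threaten a charging deficit, and resolving them is the technical heart of the argument: a degree-$3$ vertex expanded with $k=1$ has two of its three neighbours already in $F$, hence carries chords whose endpoints have already been credited, and one must show---by always expanding an alive leaf maximising $k$ and by tracking a stored surplus in the potential---that such deficits are always prepaid. The differential weighting then comes out because a degree-$\geq 4$ internal vertex can reach tree-degree $\geq 4$ and thus bank extra leaves via the identity, whereas a degree-$3$ vertex is capped at tree-degree $3$ and contributes only one unit to $\sum(\deg_T(v)-2)$. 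Finally, the additive $3/2$ together with the outer ceiling absorbs the integrality and boundary losses (recovering, for instance, the Kleitman--West bound $n/4+2$ when every degree equals $3$), and the verification that no deficit arises across the handful of cases for each degree class completes the proof.
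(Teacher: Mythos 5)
This statement is not proved in the paper at all: it is quoted verbatim from Karpov \cite{Kar14} (an extension of the Kleitman--West theorem \cite{KW91}), and the paper's only ``proof'' is the citation. So the relevant question is whether your sketch would constitute an independent proof, and it would not: what you have written is a plan whose decisive steps are announced rather than carried out. You correctly identify the standard framework (grow a subtree by expanding alive leaves, control the leaf count via $L=2+\sum_{v\ \mathrm{internal}}(\deg_T(v)-2)$, and run an amortised charge of $1/4$ per degree-$1$/degree-$3$ vertex and $1/3$ per vertex of degree at least $4$), and you correctly locate the dangerous cases ($k=1$ expansions and the clean-up phase). But the entire content of the theorem lies in discharging those cases, and at exactly that point you write that ``one must show \dots that such deficits are always prepaid'' and that ``the verification \dots completes the proof.'' That verification is the theorem. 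Even for the unweighted $n/4+2$ bound, Kleitman and West need a carefully prioritised choice of which leaf to expand and a nontrivial case analysis of the possible local configurations; Karpov's refinement with the $t/3$ term occupies a substantial paper. Asserting that a stored surplus covers the deficit, without defining the surplus, specifying the expansion priority rule, or enumerating the local configurations, is a gap, not a proof.

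Two concrete points where the sketch would need real work. First, the $k=1$ expansion of a degree-$3$ vertex $x$ adjoins one new vertex $y$, gains zero net leaves, and adds $w(y)$ to the credit; your proposed repair is to charge the single chord from $x$ back into $F$, but you must then guarantee that no vertex of $F$ absorbs such charges more often than its banked surplus allows, which requires a global accounting scheme (this is precisely where the Kleitman--West priority rules enter). Second, you do not address vertices of degree $2$ at all: they receive zero credit, which is consistent with the bound, but arbitrarily long induced paths of degree-$2$ vertices force arbitrarily many consecutive $k=1$ expansions with no leaf gain, and one must check that the interleaving of these with degree-$3$ expansions never drives the potential below the invariant; the usual remedy (suppressing or contracting degree-$2$ vertices before the argument, then lifting the tree back) is a step you would need to make explicit and verify preserves the leaf count. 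As it stands the proposal is a reasonable roadmap toward reproving \cite{KW91} and \cite{Kar14}, but it is not a proof of the stated theorem.
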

Theorem \ref{Karpov}, together with the above observation, immediately gives an affirmative answer to \cite[Conjecture 1]{Vin17}, that any graph family where the proportion of vertices of degree at least $3$ is bounded away from $0$ has an exponential lower bound on $N(G)$, \ie $c(G)$ is bounded away from $1/2$. It also gives the following bound for graphs with no vertex of degree $2$; notice that in this case we can dispose of the $o(1)$ term entirely.
\begin{thm}Any non-trivial connected graph $G$ with no vertex of degree $2$ satisfies $D(G)<\alpha(2^{-3/4})<0.95831$.
\end{thm}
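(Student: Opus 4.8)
The plan is to combine the lower bound on the number of connected sets coming from spanning trees with many leaves (Theorem \ref{Karpov}) with the extremal comparison underlying Theorem \ref{too-many-sets}, but carried out against each graph's \emph{own} exponent rather than against the fixed constant $2^{-3/4}$; it is this change that lets me delete the $o(1)$. First I would bound the leaves. Since $G$ has no vertex of degree $2$, every vertex has degree $1$, $3$, or at least $4$; writing $s$ and $t$ as in Theorem \ref{Karpov} we have $s+t=n$, and as the coefficient of $t$ exceeds that of $s$, the guaranteed spanning tree has at least $s/4+t/3+3/2\ge n/4+3/2$ leaves. By Vince's observation this gives $N(G)\ge 2^{n/4+3/2}$, whence
\[c(G)=\tfrac12 N(G)^{1/n}\ge 2^{-3/4+3/(2n)}>2^{-3/4}.\]
Writing $\alpha=\alpha(2^{-3/4})$ and $\beta=\alpha(c(G))$, the strict monotonicity of $x\mapsto x^{-x}(1-x)^{x-1}$ on $(1/2,1)$ (a one-line computation, implicit in the definition of $\alpha(\cdot)$) gives $\beta<\alpha$, and a mean-value estimate turns the \emph{exact} factor $2^{3/2}$ into a uniform lower bound $(\alpha-\beta)n\ge\delta_0$ for an absolute constant $\delta_0>0$ (numerically about $0.33$).

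Next I would bound $A(G)$. As in Theorem \ref{too-many-sets}, $A(G)$ is at most the average order of the $N(G)$ largest subsets of $V(G)$, but I would now apply Lemma \ref{binomial} at the threshold $\beta$ rather than $\alpha$. Splitting the $N(G)$ largest sets into those lying in $\mathcal S_\beta$ and the remainder (each of order at most $\lceil\beta n\rceil-1\le\beta n$), and using the average-order bound of Lemma \ref{binomial} on $\mathcal S_\beta$, I would obtain
\[A(G)<\beta n+\frac{\abs{\mathcal S_\beta}}{N(G)}\Bigl(1+\tfrac{\beta(1-\beta)}{(2\beta-1)^2}\Bigr).\]
The whole point is the ratio $\abs{\mathcal S_\beta}/N(G)$: since $N(G)=\bigl(\beta^{-\beta}(1-\beta)^{\beta-1}\bigr)^n$ while $\abs{\mathcal S_\beta}=\Theta\bigl(\binom{n}{\lceil\beta n\rceil}\bigr)$, and the central binomial coefficient is only a $\Theta(n^{-1/2})$ fraction of $\bigl(\beta^{-\beta}(1-\beta)^{\beta-1}\bigr)^n$, this ratio tends to $0$ (this also guarantees $N(G)\ge\abs{\mathcal S_\beta}$, so that the split is legitimate). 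Thus $A(G)<\beta n+o(1)$, and combining with $(\alpha-\beta)n\ge\delta_0$ yields $A(G)<\beta n+\delta_0\le\alpha n$, that is $D(G)<\alpha(2^{-3/4})$, for all sufficiently large $n$. The finitely many small graphs I would check directly, where in any case $c(G)$ lies well above $2^{-3/4}$ and the buffer is correspondingly large.

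The main obstacle is exactly this vanishing of $\abs{\mathcal S_\beta}/N(G)$. Lemma \ref{binomial} supplies only the one-sided estimate $\binom nk\le\bigl((k/n)^{-k/n}(1-k/n)^{k/n-1}\bigr)^n$, which is too weak here: it only bounds the ratio by $O(1)$, leaving a genuine additive error of size about $1.05$, larger than $\delta_0$, so that comparing naively against $2^{-3/4}$ (as in Theorem \ref{too-many-sets}) really does leave an $o(1)$. I therefore need the matching \emph{lower} bound on the central binomial coefficient — the sharp two-sided Stirling estimate — to conclude that the sets of order at least $\beta n$ form a negligible fraction of all $N(G)$ connected sets. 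Equivalently, the extra factor $2^{3/2}$, carried as an exact rather than asymptotic gain, pushes the smallest order among the $N(G)$ largest subsets a full $\Theta(\log n)$ below $\beta n$; it is this strict, quantitative separation, and not the mere inequality $c(G)\ge 2^{-3/4}$, that removes the $o(1)$.
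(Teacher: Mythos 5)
Your strategy is close in spirit to the paper's, but there is a genuine gap: every estimate in your second step is uniform only when $\beta=\alpha(c(G))$ is bounded away from $1/2$, i.e.\ when $c(G)$ is not much larger than the lower bound $2^{-3/4}$ coming from Theorem \ref{Karpov}. Nothing prevents $c(G)$ from being close to $1$: take $G=K_n$, which has no vertex of degree $2$ and $N(K_n)=2^n-1$. Then $\beta\to 1/2$, the family $\mathcal S_\beta$ is simply the family of all subsets of order at least roughly $n/2$, and $\abs{\mathcal S_\beta}/N(G)\to 1/2$, not $0$. The failure is located precisely in your claim $\abs{\mathcal S_\beta}=\Theta\bigl(\binom{n}{\ceil{\beta n}}\bigr)$: the upper-bound constant supplied by Lemma \ref{binomial} is $\beta/(2\beta-1)$, which blows up as $\beta\to 1/2$ (indeed $\abs{\mathcal S_{1/2+o(1)}}\approx 2^{n-1}$ exceeds the central binomial coefficient by a factor of order $\sqrt n$, so no two-sided Stirling estimate can rescue the claim). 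The correction term $1+\beta(1-\beta)/(2\beta-1)^2$ degenerates in the same regime, so your upper bound on $A(G)$ becomes vacuous exactly where you have not controlled it. The conclusion is of course easy when $c(G)$ is large, but your chain of inequalities does not deliver it; you would need an explicit case split, e.g.\ if $c(G)\geq 2^{-3/4}+\eps_0$ for a fixed $\eps_0>0$, apply Theorem \ref{too-many-sets} directly to get $D(G)\leq\alpha(2^{-3/4}+\eps_0)+o(1)<\alpha(2^{-3/4})$ for large $n$, and reserve your argument for the complementary range, where $\beta$ is indeed bounded away from $1/2$ and your constants are uniform.

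It is worth contrasting this with how the paper sidesteps the issue. The paper never compares against the graph's own exponent: it keeps the fixed threshold $\alpha=\alpha(2^{-3/4})$, but exploits that the $2^{\ceil{n/4+3/2}}$ connected sets produced by the spanning tree have average order only about $7n/8<\alpha n$, and that this collection outnumbers $\mathcal S_\alpha$ by a definite constant factor ($2^{3/2}$ against $1.1$). Adding further connected sets of order below $\alpha n$ only lowers the average, so no upper control on $N(G)$ is ever required, and the inequality closes for every $n\geq 4$ without asymptotics. Your idea of moving the threshold from $\alpha$ to $\beta$ is a legitimate alternative way to kill the $o(1)$, and your mean-value bound $(\alpha-\beta)n\geq\delta_0$ is sound, but as written the argument is incomplete; note also that ``check the finitely many small graphs directly'' presupposes an effective $n_0$, which your uniformity constants must supply.
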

\begin{proof}Let $G$ have $n$ vertices; we may assume $n\geq 4$ since $D(K_2)=2/3$. For ease of notation write $\alpha=\alpha(2^{-3/4})$; note that $0.95<\alpha<1$ and so $\frac{\alpha}{2\alpha-1}<1.1$ and $\frac{\alpha(1-\alpha)}{2\alpha-1}<0.1$. 

By Theorem \ref{Karpov} there is a collection $\mathcal C_1$ of at least $2^{\ceil{n/4+3/2}}$ connected sets of average order at least $7n/8$. Let $\mathcal C_2$ be all other connected sets of $G$, and suppose that $D(G)>\alpha(2^{-3/4})$, \ie 
\begin{equation}\frac{\sum_{S\in \mathcal C_2}\abs S/n+2^{\ceil{n/4+3/2}}\times7/8}{\abs{\mathcal C_2}+2^{\ceil{n/4+3/2}}}>\alpha.\label{condition}\end{equation}
Note that if \eqref{condition} holds, then it still holds after adding a set of size at least $\alpha n$ to $\mathcal C_2$ or after deleting a set of size less than $\alpha n$ from $\mathcal C_2$. Thus we must also have
\[\frac{\sum_{S\in \mathcal S_\alpha}\abs S/n+2^{\ceil{n/4+3/2}}\times7/8}{\abs{\mathcal S_\alpha}+2^{\ceil{n/4+3/2}}}>\alpha.\]

Now by Lemma \ref{binomial} and choice of $\alpha$ we have $\sum_{S\in \mathcal S_\alpha}\abs S/n\leq \abs{S_\alpha}(\alpha+1.1/n)$ and $\abs{S_\alpha}\leq 1.1\times 2^{n/4}$. Consequently we must have $1.1(\alpha+1.1/n)+2^m\times 7/8>(1.1+2^m)\alpha$, where $m=\ceil{n/4+3/2}-n/4$, which is false for every $n\geq 4$.
\end{proof}

Note in the proof above that we start from a large group $\mathcal C_1$ of connected sets with average order about $7n/8$, and $7/8$ is much less than the upper bound on $D(G)$ we obtain. However, the fact that the connected sets contain a large subcollection of small average size does not immediately help us, since for any fixed $\eps>0$ the family of subsets of order at least $(\alpha(2^{-3/4})-\eps/2)n$ is exponentially larger than $\mathcal C_1$, and so for $n$ sufficiently large there is a collection of subsets containing $\mathcal C_1$ but with average order exceeding $(\alpha-\eps)n$.

This analysis does not take into account any structural properties of the collection of all connected sets; it is clearly not possible for these sets and no others to be connected. One might therefore be tempted to conjecture that if an $n$-vertex graph $G$ has a spanning tree with at least $Ln$ leaves for some constant $L$ then $D(G)\leq 1-L/2+o(1)$; that is, the connected sets we construct from this tree are asymptotically at least as large as the remaining connected sets. However, the following example shows that this is not the case.

\begin{prop}For each $n$ divisible by $6$ there is an $n$-vertex graph $H_n$ containing a spanning tree with $n/2+2$ leaves but for which $D(H_n)=41/54-o(1)=3/4+1/108-o(1)$.
\end{prop}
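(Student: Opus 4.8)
The plan is to realise $H_n$ as a cyclic necklace of $n/6$ congruent six-vertex blocks, in the spirit of the criss-cross prism of Theorem \ref{thm:criss-cross}. A guiding observation dictates the design: genuine degree-$1$ vertices tend to hold the density down, since a pendant contributes only half its count in expectation to the connected sets already containing its neighbour, so a construction built from actual leaves cannot exceed density $3/4$. Hence the $n/2$ leaves I exhibit must be leaves of a spanning \emph{tree}, while $H_n$ itself has minimum degree at least $3$. Each block will therefore consist of a small dense core, which inflates the local density, together with three vertices that can be arranged as spanning-tree leaves; consecutive blocks are joined by two edges, exactly as the sets $E_i$ join the blocks $V_i$ in Theorem \ref{thm:criss-cross}, so that a connected set can pass from one block to the next only by occupying both.

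First I would exhibit the spanning tree. Three vertices of each block are taken as leaves hanging off a backbone that threads through the other three vertices of each block and along the inter-block edges; this gives $3\cdot(n/6)=n/2$ leaves, and cutting the necklace at a single block so that the backbone becomes a path rather than a cycle produces two further leaves at the ends, for $n/2+2$ in total. This shows $H_n$ meets the hypothesis of the tentative bound with leaf density $L=1/2$, for which that bound would predict $D(H_n)\le 3/4+o(1)$.

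The heart of the proof is the exact evaluation of $N(H_n)$ and of the total order $\sum_{S\in\conn[H_n]}\abs S$, which I would carry out by the interval/transfer-matrix method underlying Theorem \ref{thm:criss-cross}. As there, a connected set either meets every block or meets the blocks in a proper cyclic interval; in the latter case connectivity is governed block by block, so the sets meeting a given interval of $k$ blocks are counted by a fixed transfer matrix acting on a constant-size collection of boundary states. Summing the resulting geometric contributions, together with the $n/6$ rotations of each interval, gives $N(H_n)=\lambda^{n/6}\bigl(c_0+c_1(n/6)\bigr)+\text{(lower order)}$ for explicit constants $c_0,c_1$ and dominant transfer eigenvalue $\lambda$, the leading term arising from sets that meet all blocks.

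The main obstacle is the companion computation of the total order, since there one must track not merely the number of admissible local configurations but their sizes. I would handle this with a size-weighted (``marked'') transfer matrix, whose differentiation at the relevant point returns $\sum_{S}\abs S$; the dominant contribution is then $(\delta n)\,N(H_n)(1+o(1))$, where $\delta$ is a per-vertex density read off from the Perron eigenvector, and the block is to be chosen precisely so that $\delta=41/54$. As the closed form in Theorem \ref{thm:criss-cross} illustrates, with its $\tfrac{191}{3}n7^{n-2}$ and $-\tfrac{16}{3}n$ corrections, the boundary and rotational terms are delicate and must be shown to be of strictly lower order than the main term, as must the contributions of short intervals and of sets confined to a single block. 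Once this is verified, $A(H_n)/n$ converges to $\delta=41/54=3/4+1/108$; since this strictly exceeds $3/4$, it refutes the natural conjecture that a spanning tree with $Ln$ leaves forces $D(H_n)\le 1-L/2+o(1)$, which is the whole point of the construction.
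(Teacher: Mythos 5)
Your overall strategy -- a cyclic necklace of $n/6$ six-vertex blocks, a spanning tree with three leaves per block plus two from cutting the backbone, and a ``good for an interval of blocks'' analysis in the style of Theorem \ref{thm:criss-cross} whose leading term gives the density as the average size of a good local configuration divided by $6$ -- is exactly the paper's approach. But there is a genuine gap: you never exhibit the block. The proposition is an existence statement whose entire content is a concrete gadget realising the specific value $41/54$, and your proposal defers precisely this point (``the block is to be chosen precisely so that $\delta=41/54$'') without showing that such a block exists, what it is, or why the number $41/54$ comes out. The paper's block is the fan $H$ on $\{u,v,w,x,y,z\}$ with $v$ joined to all other vertices and $u,x,y,z,w$ forming a path; consecutive copies are linked by a \emph{single} edge from $w$ to the next $u$ (not by two edges as you propose), the spanning tree is the union of the stars at the hubs $v$ together with all but one linking edge, and a connected set is ``good'' for a copy when its trace contains a $u$--$w$ path. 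There are exactly $9$ good traces ($\{u,x,y,z,w\}$ and the $8$ supersets of $\{u,v,w\}$), of total size $41$, whence the density $\frac{41}{9}\cdot\frac16=\frac{41}{54}$. Without producing such a gadget and verifying this count, the claimed constant cannot be checked, so the proof is a schema rather than a proof.

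Two smaller points. First, your two-edge linking of consecutive blocks changes the boundary analysis (with a single linking edge, ``good'' is simply the existence of a $u$--$w$ path inside one copy, which is what makes the count of $9$ configurations clean), so the architecture needs to be pinned down before the transfer-matrix bookkeeping can even be set up. Second, your opening heuristic that genuine pendant vertices cap the density at $3/4$ is unsubstantiated and plays no role in the argument; it can be dropped. The heavy machinery of a marked transfer matrix is also unnecessary here: once one knows that almost all connected sets are good for all but $o(n)$ copies and that goodness decouples across copies, the density is read off directly from the average trace size, as the paper does.
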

\begin{proof}Let $H$ be the graph with vertex set $\{u,v,w,x,y,z\}$ and edge set $\{uv,\allowbreak vw,\allowbreak vx,\allowbreak vy,\allowbreak vz,\allowbreak ux,\allowbreak xy,\allowbreak yz,\allowbreak zw\}$, and let $H_n$ be the graph consisting of $n/6$ copies of $H$ linked cyclically, with an edge from each copy of $w$ to the next copy of $u$; see Figure \ref{hexagons}. Here there is a spanning tree with $n/2+2$ leaves (a section of which is shown in bold); however, we may argue as in Theorem \ref{thm:criss-cross} to estimate $D(G)$. For a connected set $S\subseteq V(H_n)$ and a given copy of $H$ in $H_n$, we say $S$ is good for that copy if the restriction of $S$ to that copy contains a $u$-$w$ path. Note that there are $9$ different ways $S$ can be good for a copy, since it can intersect that copy in $\{u,x,y,z,w\}$ or any subset containing $\{u,v,w\}$. If $S$ is bad for two copies of $H$ then $S$ intersects at most one of the components left when these copies are removed. Consequently if $S$ is good for $k$ copies then these copies form an interval, and there are $\Theta(9^kn)$ connected sets which are good for exactly $k$ copies. It follows that almost all connected sets are good for almost all copies, and since the connected sets which are good for a given copy intersect that copy in $41/9$ vertices on average, this gives the desired result.\end{proof}
\begin{figure}
\centering
\begin{tikzpicture}
\foreach \y in {-3,0,3,6}{%
\foreach \x in {30,90,...,330}{\draw[fill] (\y,0)+(\x:1) circle (0.05);
\draw (\y,0)++(\x:1) -- ++(\x+120:1);
\draw[very thick] (\y,0)++(\x:1) -- (\y,1);}
\draw[very thick] (\y,0)++(30:1) -- ++(1,0);
\draw[very thick] (\y,0)++(150:1) -- ++(-1,0);}
\path (-4,0)++(150:1) node[anchor=north]{$\cdots$};
\path (7,0)++(30:1) node[anchor=north]{$\cdots$};
\path (-3,0)++(150:1) node[anchor=south]{$u$};
\path (-3,0)++(90:1) node[anchor=south]{$v$};
\path (-3,0)++(30:1) node[anchor=south]{$w$};
\path (-3,0)++(210:1) node[anchor=north]{$x$};
\path (-3,0)++(270:1) node[anchor=north]{$y$};
\path (-3,0)++(330:1) node[anchor=north]{$z$};
\end{tikzpicture}
\caption{Part of an $n$-vertex graph having a spanning tree with more than $n/2$ leaves but having connected set density $3/4+1/108-o(1)$.}\label{hexagons}
\end{figure}
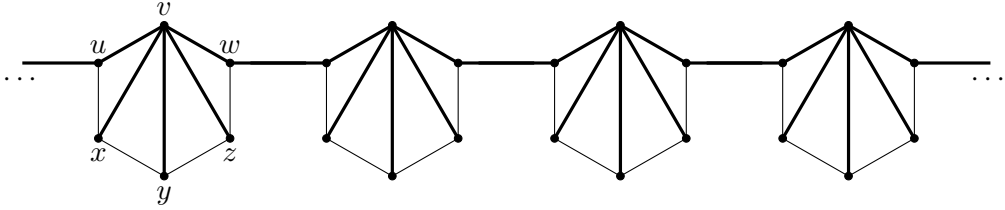

We can also show that many vertices of degree $2$ are required for high connected set density, which generalises a result of Jamison \cite[Theorem 6.2]{Jam83} for trees.
\begin{thm}If $G_n$ is a sequence of non-trivial connected graphs such that $G_n\to 1$ then $v_2(G_n)/\abs{G_n}\to 1$, where $v_2(G_n)$ is the number of vertices of degree $2$.\end{thm}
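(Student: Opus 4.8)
The plan is to argue by contradiction, using the two imported results: Karpov's leaf bound (Theorem \ref{Karpov}) to produce many connected sets, and Theorem \ref{too-many-sets} to turn that into an upper bound on the density. Suppose the conclusion fails, so that $v_2(G_n)/\abs{G_n}$ does not tend to $1$. Then there is a constant $\delta>0$ and a subsequence along which the number of vertices \emph{not} of degree $2$ is at least $\delta\abs{G_n}$. Since the graphs are connected and non-trivial there are no isolated vertices, so each such vertex has degree $1$, $3$, or at least $4$, and hence contributes to either $s$ or $t$ in the notation of Theorem \ref{Karpov}; that is, $s+t\geq\delta\abs{G_n}$ along the subsequence. (We may also assume $\abs{G_n}\to\infty$ along this subsequence, since for each fixed order there are only finitely many graphs and each has density strictly below $1$, so $D(G_n)\to1$ forces the orders to grow.)

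First I would apply Theorem \ref{Karpov} to get a spanning tree with at least $s/4+t/3+3/2\geq(s+t)/4\geq\delta\abs{G_n}/4$ leaves. By Vince's observation recalled above, any set containing all the internal vertices of this spanning tree is connected, so $N(G_n)\geq 2^{\delta\abs{G_n}/4}$, whence $c(G_n)=\tfrac12 N(G_n)^{1/\abs{G_n}}\geq 2^{\delta/4-1}=:c_0$. As $\delta>0$ we have $c_0>1/2$, so $c(G_n)$ is bounded away from $1/2$ along the subsequence.

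I would then feed this into Theorem \ref{too-many-sets} with $c=c_0$, obtaining $D(G_n)\leq\alpha(c_0)+o(1)$. Because $c_0>1/2$, the corresponding value $\alpha(c_0)$ lies strictly below $1$ (the correspondence $\alpha\mapsto c$ sends $\alpha=1/2$ to $c=1$ and $\alpha\to1$ to $c\to1/2$, so any fixed $c_0>1/2$ gives $\alpha(c_0)<1$). Hence $D(G_n)$ is bounded away from $1$ along the subsequence, contradicting $D(G_n)\to1$, and the theorem follows.

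The argument is essentially a direct chaining of the two main results, so I do not anticipate a serious obstacle; the only point requiring care is the bookkeeping of limits — ensuring $\delta$, $c_0$ and $\alpha(c_0)$ are uniform along the subsequence and that the $o(1)$ term is harmless, which is precisely why one needs $\abs{G_n}\to\infty$. It is worth emphasising that, unlike the affirmative answer to Vince's Conjecture 1, this result genuinely uses the full strength of Karpov's theorem: degree-$1$ vertices, and not only those of degree at least $3$, must be counted as contributing leaves, which is exactly what the term $s$ in Theorem \ref{Karpov} supplies.
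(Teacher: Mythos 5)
Your proof is correct and follows essentially the same route as the paper: negate the conclusion, feed the resulting linear-in-$n$ count of non-degree-$2$ vertices into Karpov's theorem to get a spanning tree with linearly many leaves, deduce $c(G_n)$ is bounded away from $1/2$, and invoke Theorem \ref{too-many-sets} to bound $D(G_n)$ away from $1$. Your bookkeeping (using $s/4+t/3\geq(s+t)/4$) is if anything slightly more careful than the paper's, and your closing remark about why degree-$1$ vertices force the use of Karpov rather than Kleitman--West is exactly the point.
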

\begin{proof}Suppose not, so that there is such a sequence with $v_2(G_n)<x\abs{G_n}$ for some $x<1$; since $D(G_n)\to 1$ and each $G_n$ is non-trivial we have $\abs{G_n}\to\infty$. Theorem \ref{Karpov} gives a spanning tree with at least $(1-x)\abs{G_n}/3$ leaves, and so $c(G_n)\geq 2^{-(x+2)/3}>1/2$. Theorem \ref{too-many-sets} therefore implies $D(G_n)\leq \alpha(2^{-(x+2)/3})+o(1)$, a contradiction.
\end{proof}
Finally, we consider the effect of increasing the minimum degree.
\begin{thm}If $G_n$ is a sequence of connected graphs with $\delta(G_n)\to\infty$ then $D(G_n)\to1/2$.\end{thm}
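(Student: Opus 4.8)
The plan is to deduce this directly from Theorem \ref{too-many-sets}, once we know that $\delta(G_n)\to\infty$ forces $c(G_n)\to1$. Recall that Theorem \ref{too-many-sets} gives, whenever $c(G_n)\ge c$, the two-sided bound $1-\alpha(c)-o(1)\le D(G_n)\le \alpha(c)+o(1)$, where $\alpha(c)\in(1/2,1)$ is determined by $g(\alpha):=\alpha^{-\alpha}(1-\alpha)^{\alpha-1}=2c$. The function $g$ is the exponential of the binary entropy of $\alpha$; it is continuous and strictly decreasing on $(1/2,1)$, with $g(1/2)=2$ and $g(\alpha)\to1$ as $\alpha\to1$. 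Hence $\alpha(c)\to1/2$ as $c\to1^-$. Granting $c(G_n)\to1$, the argument concludes by a squeeze: fix any $c\in(1/2,1)$, note that $c(G_n)\ge c$ for all large $n$, and apply Theorem \ref{too-many-sets} to obtain $\limsup_n D(G_n)\le\alpha(c)$ and $\liminf_n D(G_n)\ge1-\alpha(c)$. Since this holds for every such $c$, letting $c\to1$ gives $\limsup_n D(G_n)\le1/2\le\liminf_n D(G_n)$, so $D(G_n)\to1/2$.

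It therefore remains to prove $c(G_n)\to1$, which is where the real work lies. First note that since $G_n$ is connected, $\abs{G_n}\ge\delta(G_n)+1\to\infty$, so we are genuinely in the regime $n\to\infty$. The key fact is that a connected graph with minimum degree $\delta$ has a connected dominating set $S_0$ of sublinear size as $\delta\to\infty$: the classical domination bound provides a dominating set of size at most $\tfrac{(1+\ln(\delta+1))}{\delta+1}\,n$, and the connected domination number exceeds this by at most a constant factor (for instance, $\gamma_c\le 3\gamma-2$ by the theorem of Duchet and Meyniel), so $\abs{S_0}=o(n)$. Alternatively one may invoke a max-leaf spanning tree bound, since the internal vertices of such a tree form a connected dominating set. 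Now every superset of a connected dominating set induces a connected subgraph, so $N(G_n)\ge 2^{\,n-\abs{S_0}}$, whence $c(G_n)=\tfrac12 N(G_n)^{1/n}\ge 2^{-\abs{S_0}/n}\to1$, as required.

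The main obstacle is exactly this lower bound on $N(G_n)$, i.e.\ showing $c(G_n)\to1$; the remainder is limit-taking in a result already established. I expect the connected-domination estimate to carry the argument, and it is worth stressing why a more naive route fails. One might hope to show $D(G_n)\to1/2$ by arguing that almost every subset is connected, but this is false: a pair of large cliques joined by a single edge has $\delta\to\infty$ while a constant fraction of subsets are disconnected. The point is not that the bad sets are few, but that their sizes are not biased away from $n/2$; the clean way to capture this is precisely through the counting bound $c(G_n)\to1$ fed into Theorem \ref{too-many-sets}, which automatically controls both the upper and lower deviation of $D(G_n)$ from $1/2$ without any direct analysis of the disconnected sets.
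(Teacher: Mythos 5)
Your proof is correct, and its architecture coincides with the paper's: establish that $\delta(G_n)\to\infty$ forces $c(G_n)\to1$ via a small connected dominating structure, then feed this into Theorem \ref{too-many-sets} and let $c\to1$ (the paper phrases this final step as a contradiction with a fixed $x>1/2$ rather than a squeeze, which is immaterial). The one substantive difference is the external ingredient used for the counting step: the paper invokes Kleitman and West's theorem that minimum degree $k$ (for $k$ large) forces a spanning tree with at least $\bigl(1-3\tfrac{\log k}{k}\bigr)\abs{G}$ leaves, whose internal vertices give the small connected dominating set and hence $c(G)>2^{-3\log k/k}$, whereas you combine the greedy domination bound $\gamma\le\tfrac{1+\ln(\delta+1)}{\delta+1}n$ with Duchet--Meyniel's $\gamma_c\le3\gamma-2$. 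Both yield a connected dominating set of size $O(n\log\delta/\delta)=o(n)$ and hence $N(G_n)\ge2^{n-o(n)}$, so the routes are quantitatively comparable; yours trades one citation for two more classical ones, and you correctly flag the max-leaf spanning tree bound (the paper's actual choice, and the mechanism it already used earlier in Section \ref{sec:q8}) as an alternative. Your closing remark that one cannot simply argue that almost all subsets are connected (two large cliques joined by an edge) is a sound justification for why the detour through $c(G_n)$ and the two-sided bound of Theorem \ref{too-many-sets} is genuinely needed.
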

\begin{proof}Suppose not, and take such a sequence with $D(G_n)>x$ for some $x>1/2$ (the low-density case where $D(G_n)<1-x$ is similar). Choose $c<1$ such that $1/2<\alpha(c)<x$; this is possible since $\alpha^{-\alpha}(1-\alpha)^{\alpha-1}$ is bounded away from $2$ if $x\leq\alpha\leq 1$. By Theorem \ref{too-many-sets}, we must have $c(G_n)<c$ for $\abs{G_n}$ sufficiently large, and in particular if $\delta(G_n)\geq k_1$ for some fixed $k_1$. 

However, by \cite[Theorem 5]{KW91}, there is some constant $k_2$ such that for $k\geq k_2$ any graph $G$ with minimum degree at least $k$ has a spanning tree with at least $\bigl(1-3\frac{\log k}{k}\bigr)\abs{G}$ leaves. As a consequence such a graph has $c(G)>2^{-3\frac{\log k}{k}}$. For $k\geq k_3$ we have $2^{-3\frac{\log k}{k}}>c$, and for $n$ sufficiently large $\delta(G_n)\geq\max\{k_1,k_2,k_3\}$, giving a contradiction.
\end{proof}

\section{Open problems}
This work leaves several natural open problems. First, to improve either of the bounds obtained on $c_3$. In particular, is the criss-cross prism asymptotically optimal? Concerning the corresponding quantity for $d$-regular graphs, $c_d$, we know by Proposition \ref{large-degree} that $c_d\to 1$ as $d\to\infty$, but how quickly does $c_d$ approach $1$?

Secondly, we might ask about the corresponding upper bounds on $D(G)$. What is the asymptotic maximum density $\limsup_{G\in\mathcal G_3} D(G)$? Does the answer change if we consider graphs of minimum degree $3$, rather than $3$-regular graphs? The best known lower bound for either case is $5/6$: Vince \cite[Theorem 3.1]{Vin21} evaluated the connected set density of the necklace graphs precisely, and this is $5/6-o(1)$. Note that this is greater than the corresponding upper bound for series-reduced trees.

Finally, we turn to lower bounds. Here we expect graphs of minimum degree $3$ to satisfy the same lower bound as series-reduced trees, that $D(G)\geq 1/2$; this was conjectured by Vince \cite[Conjecture 7]{Vin21}.

\end{document}